\documentclass[11pt,english]{article}
\usepackage[T1]{fontenc}
\usepackage[latin9]{inputenc}
\usepackage[a4paper]{geometry}
\usepackage{enumitem}
\usepackage{amsmath}
\usepackage{amsthm}
\usepackage{amssymb}

\makeatletter
\numberwithin{equation}{section}
\theoremstyle{plain}
\newtheorem{thm}{\protect\theoremname}[section]
\theoremstyle{plain}
\newtheorem{conjecture}[thm]{\protect\conjecturename}
\theoremstyle{plain}
\newtheorem{cor}[thm]{\protect\corollaryname}
\theoremstyle{plain}
\newtheorem*{thm*}{\protect\theoremname}
\theoremstyle{plain}
\newtheorem{prop}[thm]{\protect\propositionname}
\theoremstyle{definition}
\newtheorem{problem}[thm]{\protect\problemname}
\theoremstyle{plain}
\newtheorem{lem}[thm]{\protect\lemmaname}
\theoremstyle{remark}
\newtheorem*{rem*}{\protect\remarkname}

\newcommand{\hide}[1]{}

\DeclareMathOperator{\diam}{diam}
\DeclareMathOperator{\supp}{supp}

\DeclareMathOperator{\cov}{cov}

\DeclareMathOperator{\diag}{diag}

\DeclareMathOperator{\dense}{Dense}

\makeatother

\usepackage{babel}
\providecommand{\conjecturename}{Conjecture}
\providecommand{\corollaryname}{Corollary}
\providecommand{\lemmaname}{Lemma}
\providecommand{\problemname}{Problem}
\providecommand{\propositionname}{Proposition}
\providecommand{\remarkname}{Remark}
\providecommand{\theoremname}{Theorem}

\begin{document}
\date{}
\title{Dense $\times b$-orbits from $\times a$-invariant sets and measures,
in zero entropy }
\author{Michael Hochman\thanks{Research supported by ISF research grant 3056/21. The author wishes
to thank the Department of Mathematics of the University of Warwick
for its hospitality during the 2024/5 academic year.}}
\maketitle
\begin{abstract}
Let $a,b$ be multiplicatively independent integers. We prove that
if $\mu$ is a zero-entropy non-atomic $\times a$-invariant measure
on $[0,1]$, then $\mu$-a.e.~$x$ has dense $\times b$-orbit. Related
statements follow for many closed $\times a$-invariant sets, and
in particular we conclude the existence of irrational numbers with
minimal complexity in base $a$ and maximal complexity in base $b$.
We extend~these results to commuting endomorphisms of the torus under
an irreducibility and hyperbolicity condition. 
\end{abstract}

\section{\label{sec:Introduction}Introduction}

\subsection{Background}

One of the most perplexing problems of metric number theory is to
explain how constraints on the base-$a$ expansion of a real number
affect its base-$b$ expansion, where $a,b\geq2$ are multiplicatively
independent integers  (i.e.~$a^{m}\neq b^{n}$ for all $m,n\in\mathbb{N}$).
By ``constraints'' we will understand either that $x$ is restricted
to a closed set $X\subseteq[0,1]$ invariant under the map $T_{a}:x\mapsto ax\bmod1$
(we call such a set an $a$-set), or that it is drawn at random from
a $T_{a}$-invariant Borel probability measure (an $a$-measure).
The first of these amounts to restricting which finite blocks of $a$-digits
can appear in the expansion, and the second to prescribing their frequencies.
We gauge the effect on the base-$b$ expansion similarly, in terms
of the orbit $O_{b}(x)=\{T_{b}^{n}x\}_{n\geq0}$ and the $b$-set
that is its closure. The orbit may be dense (all blocks of $b$-digits
appear in $x$), in which case $x$ is said to be $b$-transitive;
or the orbit may uniformly distribute in $[0,1]$ (all blocks of $n$
digits appear in $x$ with frequency $b^{-n}$), in which case $x$
is called $b$-normal. A more subtle measure for the size of the orbit
closure $Y=\overline{O_{b}(x)}$ is its Hausdorff dimension $\dim Y$,
which for $b$-sets coincides with its Minkowski (box) dimension $\dim_{B}Y$,
and corresponds to the exponential growth rate of the number of base-$b$
blocks of length $n$ in the $b$-expansion of $x$, or, up to a constant,
with the topological entropy of $T_{b}$ acting on $Y$.

The central tenet surrounding this question is that constraining $x$
in base $a$ leaves the $b$-expansion free to be, and perhaps even
forces it to be, highly complex. This was made precise around 1970
by Furstenberg, who proposed the following conjecture \cite{furstenberg1970intersections}: 

\begin{conjecture}
If $a,b\geq2$ are multiplicatively independent integers, then 
\begin{equation}
\dim\overline{O_{a}(x)}+\dim\overline{O_{b}(x)}\geq1\qquad\text{for all }x\in[0,1]\setminus\mathbb{Q}\label{eq:transversality}
\end{equation}
Equivalently, if $X\subseteq[0,1]$ is an $a$-set, then every $x\in X\setminus\mathbb{Q}$
satisfies $\dim\overline{O_{b}(x)}\geq1-\dim X$.
\end{conjecture}

This conjecture is very much open, and seems well beyond the reach
of current methods. But the work of many authors, starting with Cassels
and Schmidt \cite{Cassels1959,Schmidt1960}, supports it by showing
that certain $a$-sets of positive dimension, especially those satisfying
specification-type properties, contain many points $x$ satisfying
$\dim\overline{O_{b}(x)}\geq1-\dim X$. We mention two results which
summarize the state of the art. 

The first is Host's theorem, which states that if $\mu$ is an ergodic
$a$-measure of positive entropy, then $\mu$-a.e.~point is $b$-normal,
and hence also $b$-transitive \cite{Host95,Lindenstrauss2001b,HochmanShmerkin2015-equidistribution-from-fractal-measures}.
To apply this we merely note that if $X$ is an $a$-set and $\dim X>0$,
then the topological entropy of $X$ is positive, so we can find an
ergodic $a$-measure $\mu$ on $X$ with positive entropy; whence
Host's theorem tells us that the measure is supported on $b$-normal
(and $b$-transitive) points in $X$. This result is closely related
to Furstenberg's $\times2,\times3$-problem, which we return to later,
and like all other results about on the $\times2,\times3$ problem,
Host's result is not known when the entropy is zero.

The other relevant result is the groundbreaking work of Shmerkin and
Wu, who realized part of Furstenberg's program towards the conjecture
\cite{Shmerkin2019,Wu2019}: They showed that for every $a$-set $X$
and $b$-set $Y$, 
\begin{equation}
\dim X\cap Y\leq\max\{0,\dim X+\dim Y-1\}.\label{eq:intersections}
\end{equation}
From this it can be shown that $\dim\overline{O_{b}(x)}\geq1-\dim X$
outside a set of $x\in X$ of dimension $0$. Thus, if $\dim X>0$,
then ``most'' $x\in X$ will satisfy $\dim\overline{O_{b}(x)}\geq1-\dim X$. 

Unfortunately, none of the results above, nor, to our knowledge, any
other existing work, apply when $\dim X=0$ or $h_{\mu}(T_{a})=0$.
We point out that when $\dim X=0$, Furstenberg's conjecture predicts
that the $b$-orbit of every $x\in X\setminus\mathbb{Q}$ should be
dense, since its closure is a full-dimensional $b$-set.

\subsection{Main results}

Our contribution to this matter is the theorem below, which establishes
the existence of $b$-transitive points for many $a$-sets and $a$-measures.
The novelty is that it covers the zero-entropy case as well; when
the entropy is positive, Host's theorem gives a much stronger conclusion. 
\begin{thm}
\label{thm:main-1-d}Let $a,b\geq2$ be multiplicatively independent
integers. Let $\mu\in\mathcal{P}([0,1])$ be a non-atomic, $T_{a}$-invariant
and ergodic probability measure. Then $\mu$-a.e.~$x$ has dense
orbit under $T_{b}$. 
\end{thm}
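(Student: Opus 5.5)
The plan is to argue by contradiction and reduce to a statement about a single $b$-invariant closed set of positive $\mu$-measure. For an open interval $I$ with rational endpoints, let
\[
Z_I=\{x\in[0,1]\,:\,T_b^n x\notin I\text{ for all }n\geq0\}.
\]
This is a closed $T_b$-invariant set (it is forward invariant, $T_bZ_I\subseteq Z_I$). It is proper, since it misses $I$. Hence $\dim Z_I<1$, because a $b$-set omitting an interval omits every $b$-adic block whose cylinder lies in $I$. A point fails to be $b$-transitive exactly when it lies in some $Z_I$. Since there are countably many $I$, if the theorem fails then $\mu(Z)>0$ for some fixed $Z=Z_I$, and we fix this $Z$ from now on. The aim is to show that $\mu(Z)>0$ forces $Z$ to contain an interval, which contradicts $Z\cap I=\emptyset$ after enlarging by $T_b$-invariance, since some $T_b^n J\supseteq I$ for any interval $J$.

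The first step is to produce $a$-adic self-similarity inside $Z$ from the $a$-invariance of $\mu$. Let $\nu=\mu|_Z/\mu(Z)$. Since $\mu$ is non-atomic, $\nu$ is non-atomic, so $\supp\nu$ is a perfect set. I would pass to the $T_a$-invariant set $\bigcup_{k}T_a^{-k}Z$, which has full $\mu$-measure by ergodicity. Then, by the Poincar\'e recurrence and density point arguments, for $\mu$-typical $x$ there are infinitely many $k$ with $T_a^kx\in Z$, and these return at density points of $\nu$. At such times $T_a^k$ acts near $x$ as the affine map $y\mapsto a^ky \bmod 1$. Taking limits of the magnified sets $a^k(\supp\mu - x)$ should give microsets of $\supp\mu$ that are nontrivial (containing $0$ and another point, by non-atomicity) and that sit, up to translation, inside the $a^k$-magnifications of $Z$ at points of $Z$.

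The second step uses the $b$-structure of $Z$. Magnifications of the $b$-set $Z$ by powers $b^n$ at points of $Z$ stay inside translates of $Z$ itself, since $T_b^n$ is locally $y\mapsto b^ny$. Combining this with the $a^k$-magnifications and the irrationality of $\log a/\log b$, the accumulated microsets of $Z$ should be closed under multiplication by a dense set of scales $t\in[1,b]$, and hence by all such scales, since the family of microsets is closed. This is Furstenberg's classical argument for $\times a,\times b$-invariant closed sets, transplanted to microsets. Once a microset $M$ with $0\in M$ and $M\neq\{0\}$ is invariant under all dilations $t\in[1,b]$, it contains a segment. Translating back, $Z$ then contains a set of the form $x+b^{-n}J$ for an interval $J$, and $T_b$-invariance gives the contradiction described above.

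The main obstacle is the first step. $Z$ is only $b$-invariant, and $\mu$ carries no positive-dimensional information. So I must ensure that the $a$-magnifications of $Z$ at $\mu$-typical points of $Z$ actually yield nontrivial microsets at arbitrarily fine relative scales, rather than degenerating to $\{0\}$ or escaping $Z$. In zero entropy, $\supp\mu$ near $x$ may be extremely sparse. My first attempt would be to use non-atomicity quantitatively: for $\nu$-typical $x$, choose scales $r_k\to0$ at which $\nu(B(x,r_k)\setminus B(x,r_k/C))$ is bounded below for some fixed $C$. Such scales exist for a non-atomic measure along a subsequence. I would then try to align these scales with $a$-adic return times via ergodic averaging of $\mu$ under $T_a$, so that each magnification contains a point of $Z$ at distance comparable to $1$ from $0$.
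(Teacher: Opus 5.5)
There is a genuine gap, and it sits in the step you pass over quickly: ``the accumulated microsets of $Z$ should be closed under multiplication by a dense set of scales.'' Every compact set has a family of microsets that is closed (up to boundary effects) under magnification by any $t\geq1$. So closure of the \emph{family} cannot yield a segment. Your endgame needs a \emph{single} microset $M\ni0$, $M\neq\{0\}$, with $tM\cap[-1,1]\subseteq M$ for a dense set of $t$. In Furstenberg's setting this comes from joint invariance: for $x,y$ in one set $X$, each $a^mb^n(y-x)$ is the difference $T_a^mT_b^ny-T_a^mT_b^nx$ of two points of the same $X$.

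Your $Z$ is only $T_b$-invariant. $T_a^m$ carries it to a different $b$-set $T_a^mZ$. Likewise, the $a^k$-magnification of $\supp\mu$ at $x$ lands, via $T_a^k$, in $\supp\mu$ near $T_a^kx$, not in $Z$. That $T_a^kx$ is a $\nu$-density point of $Z$ is a statement about measure. It says nothing about the set $\supp\mu\cap B(T_a^kx,\varepsilon)$ lying in $Z$. So the claim that these microsets ``sit inside the $a^k$-magnifications of $Z$'' is unsupported, and no set carries both scalings. The remedy you sketch (annuli of definite mass aligned with return times) only addresses non-degeneracy of the microsets, not this.

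The paper never fixes one $b$-set. It follows $\overline{O_b(T_a^nx)}=T_a^n\overline{O_b(x)}$, whose dimension is $T_a$-invariant and hence a.e.\ equal to some constant $\delta$. It then uses two different mechanisms.
\begin{itemize}
\item If $\delta>0$, Host's theorem (or Rudolph--Johnson), applied to a positive-entropy $T_b$-invariant measure on $\overline{O_b(x)}$, gives a point there with equidistributed $T_a$-orbit. Commutation then transfers $\varepsilon$-density to most $T_a^ix$.
\item If $\delta=0$, points are sorted into finitely many classes $D_i$ by the Hausdorff-approximate value of their $b$-orbit closure. ``Contains a segment'' is replaced by an entropy count at a fixed scale $r$. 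For nearby generic $x,y$, the differences $|y_n-x_n|=c_{x,y}b^{\{\alpha n\}}$, taken along the positive-density set of $n$ with $T_a^nx\in D_i$, have normalized entropy near $1$. Meanwhile $x_n$ lies in a $b$-set $Y_i$ of dimension near $0$. So the $y_n$ have nearly full entropy and mostly lie in $\dense_b(\varepsilon)$.
\end{itemize}

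Neither ingredient has a counterpart in your plan. A single $Z_I$ can have dimension close to $1$ (take $I$ tiny), and your argument never uses $\dim Z$ quantitatively. The grouping by approximate orbit closure is exactly what replaces the joint invariance you are missing.
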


If one could upgrade $b$-transitivity to $b$-normality, it would
resolve the $\times2,\times3$-problem, but our methods are far too
crude to address this (see also remarks at the end of Section \ref{subsec:Relation-with-rigidity}).
The proof of Theorem \ref{thm:main-1-d} is closer to Furstenberg's
original argument for density of orbits under the joint action, combined
with some new analysis.

There is an obvious application  of the theorem to $a$-sets. The
hypotheses of the corollary below are automatic for e.g.~minimal
$a$-sets (where every orbit is dense):
\begin{cor}
\label{cor:sets-supporting-non-atomic-measures}Let $\text{\ensuremath{a,b\geq2}}$
be multiplicatively independent integers. Let $X\subseteq[0,1]$ be
an $a$-set that admits a fully supported non-atomic $a$-measure.
Then $X$ contains a dense $G_{\delta}$-set of points with dense
$b$-orbit.
\end{cor}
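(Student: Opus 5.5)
The plan is to deduce the corollary from Theorem \ref{thm:main-1-d} by a Baire category argument, in two steps.

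\emph{Step 1: density.} Let $\nu$ be a fully supported non-atomic $a$-measure on $X$. Theorem \ref{thm:main-1-d} assumes ergodicity, so I first pass to the ergodic decomposition $\nu=\int\nu_{\omega}\,d\nu(\omega)$.
\begin{itemize}
\item Since $\nu$ is non-atomic, $\nu$-almost every component is non-atomic. Indeed, an atomic ergodic $T_a$-invariant measure is uniform on a periodic orbit, and there are only countably many such orbits. If a positive-mass set of components were atomic, $\nu$ would give positive mass to a countable set, i.e.\ it would have an atom.
\item Each non-atomic ergodic component is supported in $X$, so Theorem \ref{thm:main-1-d} applies to it: $\nu_\omega$-a.e.\ point has dense $T_b$-orbit.
\item Integrating over the components, $\nu$-a.e.\ $x\in X$ has dense $T_b$-orbit.
\end{itemize}
Let $D\subseteq X$ be the set of points with dense $T_b$-orbit. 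Then $\nu(D)=1$. Every nonempty relatively open subset of $X$ has positive $\nu$-measure, because $\nu$ has full support. Hence every such set meets $D$, and $D$ is dense in $X$.

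\emph{Step 2: $G_\delta$.} Fix a countable base $\{U_k\}$ of open intervals of $[0,1]$. Then
\[
D=\bigcap_{k}\bigcup_{n\ge0}\bigl(X\cap T_b^{-n}U_k\bigr).
\]
Each $T_b^{-n}U_k$ is open up to the boundary issue at $1\equiv 0$, so the union over $n$ is relatively open in $X$. Therefore $D$ is a $G_\delta$ subset of $X$. Being also dense, $D$ is a dense $G_\delta$, as claimed. Since $X$ is compact, $D$ is also residual by Baire's theorem, though this is not needed.

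\emph{Main obstacle.} The only delicate point is that $T_b$ is discontinuous at points of the form $j/b$. To handle it, I would work on the circle $\mathbb{R}/\mathbb{Z}$, where $T_b$ is continuous, and identify $0$ with $1$; density of an orbit is unaffected by this identification. Alternatively, one can avoid continuity altogether: write $D$ as a countable intersection of sets of the form $\{x:\exists n,\ T_b^nx\in U_k\}$ and use that $T_b^{-n}$ of an open interval is a finite union of intervals that are open relative to $[0,1)$.

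The remaining ingredients are the ergodic-decomposition step and the use of full support, and both are routine.
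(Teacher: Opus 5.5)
Your proposal is correct and follows the paper's route: the paper also uses Theorem \ref{thm:main-1-d} together with full support to get density of the $b$-transitive points in $X$, and then cites the standard fact that these points form a $G_\delta$-set. Your ergodic-decomposition step and your handling of the discontinuity of $T_b$ make explicit details the paper leaves implicit. The decomposition step is needed because Theorem \ref{thm:main-1-d} assumes ergodicity while the corollary does not, and your justification (an ergodic $T_a$-invariant measure with an atom is uniform on one of countably many periodic orbits) is sound.
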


This follows because, when $X$ admits a fully supported $a$-measure,
Theorem \ref{thm:main-1-d} implies that the set of $b$-transitive
points is dense in $X$; and it is a standard fact that the set of
such points is a $G_{\delta}$-set. With regard to the hypothesis,
we note that there do exist uncountable $a$-sets that support only
atomic $a$-measures. According to the conjecture, such sets should
contain $b$-transitive points, but this does not follow from our
methods and remains open.

Corollary \ref{cor:sets-supporting-non-atomic-measures} has the following
consequence in terms of complexity of digital expansions. Recall that
for $x\in[0,1]$, the base-$a$ complexity function $c_{a,n}(x)$
is defined for $n\in\mathbb{N}$ to be the number of distinct words
 $w\in\{0,\ldots,a-1\}^{n}$ that appear consecutively in the base-$a$
expansion of $x$. If $x$ is irrational, then $c_{a,n}(x)\geq n+1$,
and there exist numbers that realize this minimal growth rate. The
maximal possible growth is $c_{a,n}(x)=a^{n}$, and is realized by
$a$-transitive points. By applying Theorem \ref{thm:main-1-d} to
an $a$-measure supported on points of minimal complexity (a Sturmian
measure in the base-$a$ coding), we get
\begin{cor}
Given $2\leq a\in\mathbb{N}$ there exist $x\in[0,1]$ with minimal
complexity in base $a$ and maximal complexity in every base $b$
that is independent of $a$.
\end{cor}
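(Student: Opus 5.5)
We deduce the corollary from Theorem \ref{thm:main-1-d} by applying it to a Sturmian measure coded in base $a$, and then intersecting over the countably many bases $b$.

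\emph{Construction of the $a$-measure.} Fix an irrational $\alpha\in(0,1)$ and let $\omega\in\{0,1\}^{\mathbb{N}}$ be a Sturmian sequence of slope $\alpha$. Let $\Omega\subseteq\{0,1\}^{\mathbb{N}}$ be the orbit closure of $\omega$ under the shift. Then $\Omega$ is minimal and uniquely ergodic, its unique invariant measure $\nu$ is non-atomic, and every $\eta\in\Omega$ has exactly $n+1$ factors of length $n$. Embed $\Omega$ into $[0,1]$ by the base-$a$ digit map
\[
\pi(\eta)=\sum_{i\geq1}\eta_{i}a^{-i},
\]
reading the symbols $0,1$ as base-$a$ digits, which is legitimate because $a\geq2$. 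Then $\pi$ conjugates the shift to $T_{a}$. Since $\Omega$ is infinite, minimal and contains no eventually periodic sequence, no $\eta\in\Omega$ ends in a tail of all $(a-1)$'s (when $a=2$, no tail $111\cdots$ can occur). Hence $\pi$ is injective on $\Omega$, and the base-$a$ expansion of $\pi(\eta)$ is exactly $\eta$. So $\mu=\pi_{*}\nu$ is a non-atomic, ergodic $T_{a}$-invariant measure. Every $x\in X=\pi(\Omega)$ is irrational and satisfies $c_{a,n}(x)=n+1$, which is minimal complexity.

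\emph{Applying the theorem.} For each $b\geq2$ multiplicatively independent of $a$, Theorem \ref{thm:main-1-d} gives a $\mu$-null set $N_{b}$ such that every $x\notin N_{b}$ has dense $T_{b}$-orbit. There are countably many such $b$, so $N=\bigcup_{b}N_{b}$ is $\mu$-null. Choose $x\in X\setminus N$; this is possible because $\mu(X)=1$. Then $x$ has minimal base-$a$ complexity and is $b$-transitive for every admissible $b$.

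\emph{From transitivity to maximal complexity.} It remains to show that a dense $T_{b}$-orbit forces $c_{b,n}(x)=b^{n}$. Given a word $w$ of length $n$ in base $b$, the interior of the cylinder interval $I_{w}$ is a nonempty open set, so some $T_{b}^{k}x$ lies in it. Lying in the interior means the first $n$ digits of $T_{b}^{k}x$ are $w$, regardless of any ambiguity in expansions, so $w$ occurs in the expansion of $x$.

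The only steps needing care are the standard Sturmian facts (non-atomicity and the absence of eventually periodic points) and the injectivity of the digit embedding. With these in hand the argument is a direct application of Theorem \ref{thm:main-1-d}.
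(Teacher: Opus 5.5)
Your proposal is correct and follows the paper's own route: apply Theorem \ref{thm:main-1-d} to a Sturmian measure coded in base $a$. You also fill in the details the paper leaves implicit, namely injectivity of the digit embedding, the countable intersection over admissible $b$, and the passage from a dense $T_{b}$-orbit to $c_{b,n}(x)=b^{n}$.
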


We can also re-state Corollary \ref{cor:sets-supporting-non-atomic-measures}
in terms of intersections. It is far less quantitative than (\ref{eq:intersections}),
but gives non-trivial information when the dimension is zero.
\begin{cor}
For $a,b,X$ as in Corollary \ref{cor:sets-supporting-non-atomic-measures},
and any non-trivial $b$-set $Y$, the intersection $X\cap Y$ has
empty interior in $X$.
\end{cor}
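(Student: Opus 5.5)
We derive the statement from Corollary \ref{cor:sets-supporting-non-atomic-measures}, arguing by contradiction. Let $Y$ be a non-trivial $b$-set, i.e.\ a closed $T_b$-invariant proper subset of $[0,1]$, so $Y\neq[0,1]$. Suppose $X\cap Y$ contains a nonempty relatively open subset $U$ of $X$. We will find a point of $U$ with dense $T_b$-orbit, which is impossible.

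The first step uses Corollary \ref{cor:sets-supporting-non-atomic-measures}. It says the set $G\subseteq X$ of $b$-transitive points is a dense $G_\delta$ in $X$. Since $U$ is nonempty and relatively open in $X$, density of $G$ in $X$ gives a point $x\in U\cap G$. Only density is needed here, not the $G_\delta$ property.

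The second step uses invariance of $Y$. Because $x\in U\subseteq Y$ and $T_b Y\subseteq Y$, we have $O_b(x)\subseteq Y$. As $Y$ is closed, $\overline{O_b(x)}\subseteq Y\subsetneq[0,1]$. This contradicts $x\in G$, so $X\cap Y$ has empty interior in $X$.

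The only delicate point is what ``non-trivial'' means. If it means $Y\neq[0,1]$, the argument is complete as written. If one also wants to exclude finite $Y$, or $Y$ consisting of rational points, those cases only make the conclusion easier, and the same argument covers them. So I expect no real obstacle: the whole content sits in Corollary \ref{cor:sets-supporting-non-atomic-measures}, and hence in Theorem \ref{thm:main-1-d}.
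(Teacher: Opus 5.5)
Your argument is correct and is exactly the intended derivation: the paper presents this corollary as a direct restatement of Corollary \ref{cor:sets-supporting-non-atomic-measures}, using only that the $b$-transitive points are dense in $X$ and that no such point can lie in a proper closed $T_b$-invariant set. Your reading of ``non-trivial'' as $Y\neq[0,1]$ is the one required, since for $Y=[0,1]$ the statement fails.
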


\subsection{\label{subsec:Relation-with-rigidity}Relation with rigidity and
the role of commutation}

Furstenberg's rigidity theorem, proved in \cite{Furstenberg67}, is
the following statement:
\begin{thm*}
[Furstenberg] If $2\leq a,b\in\mathbb{N}$ are multiplicatively independent,
and if $X\subseteq[0,1]$ is both an $a$-set and a $b$-set (i.e.~it
is closed and \emph{jointly} invariant under  $T_{a},T_{b}$), then
either $X$ is finite or $X=[0,1]$.
\end{thm*}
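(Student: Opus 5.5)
We follow Furstenberg's original argument. Let $X$ be closed, infinite, and invariant under both $T_a$ and $T_b$; we will show $X=[0,1]$. Work on the circle $\mathbb{T}=\mathbb{R}/\mathbb{Z}$. The semigroup $S=\{a^m b^n : m,n\geq0\}$ is non-lacunary: writing its elements in increasing order $s_1<s_2<\dots$, we have $s_{k+1}/s_k\to1$. This follows because $\log a/\log b$ is irrational (multiplicative independence), so $\{m\log a+n\log b\}$ has gaps tending to zero. The plan has two steps: first show $0$ is a non-isolated point of $X-X$ (or of $X$ itself after a reduction), then use non-lacunarity to spread small differences over the whole circle.

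\textbf{Step 1 (a small nonzero difference).} Since $X$ is infinite and compact, it has an accumulation point, so there are $x\neq y$ in $X$ with $|x-y|$ arbitrarily small. The delicate case is when $X$ has an isolated accumulation structure consisting of rational points. The cleanest route is to consider the difference set $D=\{x-y: x,y\in X\}$, which is closed and $S$-invariant, because $s(x-y)=sx-sy$. It contains $0$ as a non-isolated point. We therefore prove the stronger claim that any closed $S$-invariant set $A\subseteq\mathbb{T}$ having $0$ as an accumulation point equals $\mathbb{T}$. Applied to $D$, this gives $X-X=\mathbb{T}$.

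\textbf{Step 2 (non-lacunary spreading).} Given $\epsilon>0$, pick $t\in A$ with $0<|t|<\delta$. The points $s_k t$, read in $\mathbb{R}$ near the origin, increase with consecutive gaps $(s_{k+1}-s_k)|t|=s_k|t|\,(s_{k+1}/s_k-1)$. Once $k$ is large, the ratio factor is tiny, so as long as $s_k|t|\le1$ the gaps are less than $\epsilon$. Choosing $\delta$ small ensures that the portion of the sequence passing through $[0,1]$ lies in this regime. Hence $A$ is $\epsilon$-dense for every $\epsilon$, and being closed, $A=\mathbb{T}$.

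\textbf{Step 3 (returning from $X-X$ to $X$).} This is the main obstacle: knowing $X-X=\mathbb{T}$ does not immediately give $X=\mathbb{T}$. Furstenberg's fix is to find a non-isolated point of $X$ that is eventually periodic under the whole semigroup. Take a minimal closed nonempty $S$-invariant subset of the nonempty set $X'$ of accumulation points, which is closed and invariant; if $X'$ is finite, iterate the derived set, a transfinite but standard argument. Then show that $X$ accumulates at a rational point $p$ with $s p=p$ for all $s$ in a finite-index subsemigroup $S'$ (still non-lacunary). Translating by $p$, the set $X-p$ is locally $S'$-invariant near $0$ and has $0$ as an accumulation point. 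The Step 2 argument is local near $0$, because $s_k t$ stays in $[0,1]$ before wrapping. It therefore shows that $X-p$ is dense in a neighbourhood of $0$, and then in all of $\mathbb{T}$ after applying $S'$. The remaining alternative is that the minimal set contains no rational points. We rule it out by noting that an infinite minimal set $M$ gives a non-isolated $0$ in $M-M$. Controlling this case, that is, showing some accumulation point of $X$ can be taken rational, is where I expect the most care is needed. The standard argument handles it by contradiction using the structure of finite invariant sets under non-lacunary semigroups.
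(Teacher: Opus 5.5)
The paper does not prove this theorem. It quotes it from Furstenberg's 1967 paper as background, so your argument has to stand on its own.

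Your Steps 1 and 2 are fine: a closed $S$-invariant set that accumulates at $0$ is $\mathbb{T}$. The branch of Step 3 where the minimal subset $M\subseteq X'$ is finite is also fine. In that case $S$ permutes $M$, since $sM=M$ by minimality. So each $p\in M$ is fixed by the non-lacunary semigroup $S'=\langle a^{m},b^{m}\rangle$ with $m=|M|!$. Then $X-p$ is closed and genuinely (not just locally) $S'$-invariant, with $0$ non-isolated, so $X=\mathbb{T}$.

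The gap is the other branch, which is the actual content of the theorem. You never exclude an infinite $S$-minimal set $M$; equivalently, you never show that $X'$ contains a rational point.

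\begin{itemize}
\item Observing that $0$ is non-isolated in $M-M$ only gives $M-M=\mathbb{T}$, which contradicts nothing.
\item Appealing to ``the structure of finite invariant sets'' is not an argument.
\item The transfinite derived-set iteration is beside the point: once infinite minimal sets are excluded, $X'$ itself contains a finite minimal set.
\end{itemize}

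One way to close the gap uses a lemma you are missing.

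\begin{enumerate}
\item Fix a prime $q\nmid ab$ and let $S_q=\{s\in S: s\equiv 1 \bmod q\}$. It contains $a^{q-1}$ and $b^{q-1}$, so it is non-lacunary, and $S=\bigcup_{f\in F}fS_q$ for a finite set $F$.
\item Take an $S_q$-minimal $M_0\subseteq M$. The set $\bigcup_{f\in F}fM_0$ is closed and $S$-invariant, hence equals $M$. Therefore $M_0$ is infinite, and $M_0-M_0=\mathbb{T}$ by your Step 2 applied to $S_q$.
\item Pick $x$ and $x+1/q$ in $M_0$. Since $s(x+1/q)=sx+1/q$ for $s\in S_q$, we get $\overline{S_qx}+1/q\subseteq M_0$. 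By minimality $\overline{S_qx}=M_0$, so $M_0+1/q\subseteq M_0$.
\item Since $\gcd(f,q)=1$, each $fM_0$ is invariant under translation by $\frac{1}{q}\mathbb{Z}/\mathbb{Z}$, and hence so is $M$.
\item Letting $q$ range over all primes not dividing $ab$ forces $M=\mathbb{T}$. But $\mathbb{T}$ is not minimal, because $\{0\}$ is invariant.
\end{enumerate}

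With this lemma in hand, $X'$ contains a finite minimal set, and the rest of your Step 3 goes through.
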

It turns out that when $X$ is a minimal $a$-set (i.e., no proper
closed subset of $X$ is $T_{a}$-invariant), the conclusion of Corollary
\ref{cor:sets-supporting-non-atomic-measures} follows by a short
argument from Furstenberg's rigidity theorem. The argument is quite
general, and we obtain the statement below, from which one recovers
Corollary \ref{cor:sets-supporting-non-atomic-measures} in the minimal
case by taking $S=T_{a}$, $T=T_{b}$ and $\mathcal{F}$ the family
of finite subsystems of $X$.

To fix terminology, given a continuous map $S:X\rightarrow X$ of
a metric space, a subsystem is a closed set $Y\subseteq X$ satisfying
$SY\subseteq Y$. A point $x\in Y$ is transitive (for $Y$) if the
orbit $O_{S}(x)=\{S^{n}x\}_{n\geq0}$ is dense in $Y$, the set  $Y$
is transitive if it contains a transitive point, and $Y$ is minimal
if all points in $Y$ are transitive. When more than one map is involved
we shall say $S$-transitive point, $S$-subsystem, etc.
\begin{prop}
\label{prop:minimal-sets}Let $X$ be a compact metric space, let
$S,T:X\rightarrow X$ be continuous and onto maps with $ST=TS$ and
$T$ transitive.

Let $\mathcal{F}$ be a family of $S$-subsystems of $X$ which is
closed to taking subsystems (i.e.~if $Y\in\mathcal{F}$ and $Z$
is a subsystem of $Y$ then $Z\in\mathcal{F}$). Assume further that 
\begin{quote}
({*}) The only jointly $S,T$-invariant sets are elements of $\mathcal{F}$
and $X$ itself.
\end{quote}
Then every minimal $S$-subsystem $Y\subseteq X$ that does not belong
to $\mathcal{F}$ contains a dense $G_{\delta}$-set of $T$-transitive
points.
\end{prop}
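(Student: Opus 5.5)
The plan is a Baire category argument in $Y$, reduced to producing a single $T$-transitive point in $Y$. Fix a countable base $\{U_i\}$ for the topology of $X$. The set of $T$-transitive points of $X$ is
\[
G=\bigcap_i\bigcup_{n\geq0}T^{-n}U_i ,
\]
a $G_\delta$, so $G\cap Y$ is a $G_\delta$ in $Y$. It therefore suffices to show that $G\cap Y$ is dense in $Y$.

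\textbf{Step 1: $G$ is $S$-invariant, so one point is enough.} If $x\in G$, then
\[
\overline{O_T(Sx)}=\overline{S\,O_T(x)}=S\bigl(\overline{O_T(x)}\bigr)=S(X)=X,
\]
using $ST=TS$, continuity, compactness and surjectivity of $S$. Hence $S(G)\subseteq G$. Since $Y$ is $S$-minimal, the $S$-orbit of any $y\in G\cap Y$ is dense in $Y$ and stays inside $G\cap Y$. So the whole problem reduces to showing $G\cap Y\neq\emptyset$.

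\textbf{Step 2: use $(*)$.} Let $W=\overline{\bigcup_{n\geq0}T^nY}$. It is closed and $T$-invariant. It is also $S$-invariant, because $S T^nY=T^nSY\subseteq T^nY$. Since $Y\subseteq W$ and $Y$ is an $S$-subsystem of $W$, the assumption $W\in\mathcal{F}$ would force $Y\in\mathcal{F}$, which is excluded. So $(*)$ gives $W=X$.

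To localize this, fix a nonempty relatively open $O\subseteq Y$. Note that $S(Y)=Y$, since $S(Y)$ is a closed invariant subset of the minimal set $Y$. By minimality and compactness, $Y=\bigcup_{k\leq K}S^{-k}O$ for some $K$. Writing an arbitrary $z\in Y$ as $z=S^Kw$ with $w\in Y$ then gives $Y=\bigcup_{m\leq K}S^m\overline O$. Put $A_O=\overline{\bigcup_nT^n\overline O}$. Using commutation,
\[
X=\bigcup_{m\leq K}S^m A_O ,
\]
and each $S^m A_O$ is closed and $T$-invariant. A $T$-transitive point of $X$ lies in one of them, so $S^mA_O=X$ for some $m=m(O)\leq K$. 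In terms of the open sets $D_U=Y\cap\bigcup_nT^{-n}U$, this says: for every nonempty open $U\subseteq X$, the set $O\cap D_{S^{-m}U}$ is nonempty. By commutation $S^m(D_{S^{-m}U})\subseteq D_U$, so $S^m(O\cap D_{S^{-m}U})$ meets $D_U$.

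\textbf{Step 3: density of each $D_{U_i}$.} The Baire theorem in $Y$ would finish the proof once each $D_{U_i}$ is shown to be dense in $Y$; then $G\cap Y=\bigcap_iD_{U_i}$ is a dense $G_\delta$, which also makes Step 1 unnecessary. This is the step I expect to be the main obstacle. Step 2 only gives that $O$ meets $D_{S^{-m}U}$ for some bounded shift $m\leq K(O)$, not that it meets $D_U$ itself.

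My first attempt to remove the shift is to exploit the freedom in $U$. I would replace $U$ by a smaller open set $U'$ with $S^{-m}U'\subseteq U$ for all $m\leq K$, chosen near $S^m$-images of a single point. Alternatively I would run Step 2 with $O$ replaced by $S^{-j}O$ and combine the resulting shifts so that they telescope to $0$. If neither works, I would fall back on producing one $T$-transitive point directly and then invoke Step 1. For this, I would take a nested sequence $O_1\supseteq O_2\supseteq\cdots$ of relatively open sets in $Y$ with $\overline{O_{j+1}}\subseteq O_j\cap D_{S^{-m_j}U_j}$, which Step 2 makes nonempty. I would then try to control the shifts $m_j$ along the sequence, for instance by passing to a subsequence on which the $m_j$ are constant, so that a point of $\bigcap_j\overline{O_j}$, pushed forward by a fixed power of $S$, is $T$-transitive.
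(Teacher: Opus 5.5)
Your Steps 1 and 2 are correct. Step 2 is the paper's Lemma~\ref{lem:Z-equals-X}, $\overline{\bigcup_n T^nY}=X$. However, the argument stops at the decisive point. You never show that the sets $D_U$ are dense in $Y$, nor even that $G\cap Y\neq\emptyset$, so the proposition is not yet proved. Step 2 only gives $O\cap D_{S^{-m}U}\neq\emptyset$ with a shift $m=m(O)$, and the remedies you list do not remove that shift as stated:
\begin{itemize}
\item Choosing $U'$ with $S^{-m}U'\subseteq U$ is impossible in general when $S$ is not injective. For $S=T_a$, the set $S^{-m}U'$ consists of $a^m$ intervals spread evenly around the circle, so it never lies inside a short interval $U$.
\item In the nested construction you only know $m_j\leq K(O_j)$, and $K(O_j)$ grows as $O_j$ shrinks, so there need be no constant subsequence. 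Even along one, $S^mz$ is only shown to visit the $U_j$ with $j$ in that subsequence, and these need not form a base.
\end{itemize}

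What is missing is a mechanism that makes a bounded $S$-shift harmless. The paper's mechanism is quantitative. It works with the open sets $\dense_T(\varepsilon)$ instead of $D_U$, and proves in Lemma~\ref{lem:T-transitive-points-move-with-S}, from uniform continuity of $S,\ldots,S^N$ and surjectivity, that $x\in\dense_T(\delta)$ implies $S^nx\in\dense_T(\varepsilon)$ for all $n\leq N$. In your language, a single $\delta$-dense $T$-orbit visits all the sets $S^{-n}U$, $n\leq N$, at once. One point $y\in Y\cap\dense_T(\delta)$ is obtained from $\overline{\bigcup_nT^nY}=X$ and a neighbourhood of a transitive point. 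By uniform minimality it is pushed forward to an $r$-dense set $\{S^ny\}_{n\leq N}\subseteq Y\cap\dense_T(\varepsilon)$, and Baire is applied to the sets $Y\cap\dense_T(1/n)$.

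Alternatively, your ``telescoping'' idea can be made to work inside your own framework, because good shifts are upward closed when $S$ is onto:
\begin{itemize}
\item Given $O$ with $Y\subseteq\bigcup_{k\leq K}S^{-k}O$, put $O'=Y\cap S^{-K}O$.
\item Then $Y\subseteq\bigcup_{j\leq K}S^jO'$. Indeed, write $z=S^Kw$ with $w\in Y$; some $k\leq K$ has $S^{K+k}w\in O$, so $S^kw\in O'$ and $z=S^{K-k}S^kw$.
\item Step 2 applied to $O'$ gives $S^jA_{O'}=X$ for some $j\leq K$, hence $S^KA_{O'}=S^{K-j}X=X$.
\item But $S^KA_{O'}\subseteq A_O$, since $S^K\overline{O'}\subseteq\overline{O}$ and $S$ commutes with $T$ and with closures.
\end{itemize}
Thus $A_O=X$ for every nonempty relatively open $O$, i.e.\ every $D_U$ is open and dense in $Y$, and Baire finishes the proof (Step 1 then becomes unnecessary). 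One of these two arguments has to be supplied for your proposal to be a proof.
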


The assumption that $T$ is transitive is necessary. To see this,
let $X=\mathbb{R}^{2}/\mathbb{Z}^{2}$, let $\alpha\in\mathbb{R}\setminus\mathbb{Q}$,
and $S,T$ act on $X$ by translation by $(\alpha,0)$ and $(0,\alpha)$,
respectively. Then the joint action of $S,T$ is minimal, so, taking
$\mathcal{F}=\emptyset$, all assumptions of the proposition hold
except transitivity of $T$. At the same time the conclusion fails:
the subset $(\mathbb{R}/\mathbb{Z})\times\{0\}$ is an $S$-minimal
subset containing no $T$-transitive points.

We do not know whether the commutation assumption can be relaxed,
but it plays essential role in all of our proofs. Nevertheless there
are many interesting cases where commutativity fails but rigidity
holds. For $t\in[0,1]$ let $R_{t}x=x+t\bmod1$ and consider the map
$T_{a,t}=R_{t}^{-1}T_{a}R_{t}$ of $[0,1]$. In \cite{Hochman2018-smooth-symmetries-of-xa-invariant-sets}
we proved rigidity of $T_{a,t},T_{b}$, i.e.~that there are no infinite
jointly $T_{a,t},T_{b}$-invariant minimal sets. We do not know if
the analog of Proposition \ref{prop:minimal-sets} holds:
\begin{problem}
For $a,b$ multiplicatively independent and $t\in[0,1]$, if $X\subseteq[0,1]$
is closed, infinite and minimal under $T_{a,t}$ as above, does it
contain $b$-transitive points? 
\end{problem}

Another interesting case is the pair $T_{a},G$ where $G(x)=\frac{1}{x}\bmod1$
is the Gauss map. For many base-$a$ Cantor-type sets $X$ are known
to contain badly approximable numbers, and in fact, points with dense
$G$-orbit \cite{SimmonsWeiss2019}. No such statement is known when
$\dim X=0$ (or, in fact, when $X$ is a more complicated positive-dimension
$a$-set): 
\begin{problem}
If $X$ is an $a$-set, does it contain points with dense orbit under
the Gauss map?
\end{problem}

For a related problem see Problem 10.50 in \cite{Bugeaud_2012}.

We conclude with a remark about measure rigidity. The measure analog
of Furstenberg's rigidity theorem is his so-called $\times2,\times3$-problem:
It asks if the only non-atomic, jointly-$T_{a},T_{b}$-invariant probability
measure on $[0,1]$ is Lebesgue. The problem remains open; although
an affirmative answer was given by Rudolph and Johnson for measures
with positive $T_{a}$-entropy \cite{Rudolph90,Johnson92}, nothing
is known in entropy zero.

In view of Proposition \ref{prop:minimal-sets}, the question arises
whether Theorem \ref{thm:main-1-d} would follow similarly from a
positive answer to the $\times2,\times3$-problem. As far as we can
see, it does not, because such a derivation would most likely need
to rule out the possibility that there exists a zero entropy $a$-measures
whose typical points do not equidistribute under $T_{b}$, or equidistribute
for a non-ergodic or an atomic measure.

\subsection{Commuting endomorphisms of the torus}

Furstenberg's rigidity theorem on jointly $T_{a},T_{b}$-invariant
sets was extended by Berend to commuting semigroups of toral endomorphisms
\cite{Berend1983}. Proposition \ref{prop:minimal-sets} applies in
this setting under the same assumptions as Berend's theorem. Our last
result is a generalization of Theorem \ref{thm:main-1-d}, using version
of Host's theorem that we recently established for total endomorphisms
\cite{Hochman2025a}. 

Write $M_{d}(\mathbb{Z})$ for the semigroup of $d\times d$ integer
matrices. Let $T_{A}$ denote the induced action of $A\in M_{d}(\mathbb{Z})$
on $\mathbb{T}^{d}=\mathbb{R}^{d}/\mathbb{Z}^{d}.$ A commuting pair
$A,B\in M_{d}(\mathbb{Z})$ acts totally irreducibly on $\mathbb{T}^{d}$
if there does not exist an infinite closed proper subgroup $G\leq\mathbb{T}^{d}$
such that $\{T_{A}^{m}T_{B}^{n}G\}_{m,n\in\mathbb{N}}$ is finite.
The action is higher rank if the joint action is not virtually cyclic.
Finally, recall that a matrix is expanding if all of its eigenvalues
lie outside the unit circle, and hyperbolic if none of its eigenvalues
lie on the unit circle.
\begin{thm}
\label{thm:main-multi-d}Let $A,B\in M_{d}(\mathbb{Z})$ be non-singular
matrices satisfying $AB=BA$ and with $T_{A},T_{B}$ acting totally
irreducibly on $\mathbb{T}^{d}$ and the joint action is higher rank.

Let $\mu\in\mathcal{P}(\mathbb{T}^{d})$ be a non-atomic, $T_{A}$-invariant
and ergodic probability measure. Then 
\begin{enumerate}
\item [a.] If $B$ is expanding, then the one-sided orbit $O_{B}(x)=\{T_{B}^{n}x\}_{n\geq0}$
is dense for $\mu$-a.e.~$x$.
\item [b.] If $B$ is hyperbolic and $T_{B}$ is invertible (i.e.~$\det B=\pm1$),
then the two-sided orbit $\{T_{B}^{n}x\}_{n\in\mathbb{Z}}$ is dense
for $\mu$-a.e.~$x$.
\end{enumerate}
\end{thm}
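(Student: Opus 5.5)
We will follow the same route as for Theorem~\ref{thm:main-1-d}, replacing Furstenberg's theorem by Berend's and Host's theorem by its toral version from \cite{Hochman2025a}. Fix a non-empty open set $U\subseteq\mathbb{T}^{d}$. By ergodicity of $\mu$ under $T_{A}$ it suffices to show that the set $E_U$ of $x$ whose $B$-orbit avoids $U$ is $\mu$-null. This set is $T_{A}$-invariant because $T_A$ and $T_B$ commute. Indeed, if $T_{B}^{n}T_{A}x\in U$ then $T_{A}T_{B}^{n}x\in U$; and since $T_{A}$ is a non-singular endomorphism, hence open and onto, $T_{A}^{-1}U$ is a non-empty open set. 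So we argue by contradiction: we suppose $\mu(E_U)>0$, hence $\mu(E_U)=1$ by ergodicity, and aim to show $\mu$ is atomic. Taking a countable base of open sets then gives the theorem. In case (b) the orbit is two-sided and $T_{B}$ is invertible, so we use the two-sided orbit throughout; nothing else changes.

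\textbf{Step 1 (averaging along $B$).} We form the measures $\nu_{N}=\frac{1}{N}\sum_{n<N}T_{B}^{n}\mu$ (two-sided averages in case (b)) and pass to weak-$*$ limits $\nu$. Each $\nu$ is invariant under both $T_{B}$ and $T_{A}$, since $T_{A}$ commutes with $T_{B}$ and preserves $\mu$. It is also supported on the closed set $Y=\mathbb{T}^{d}\setminus\bigcup_{n}T_{B}^{-n}U$. This set is a proper, closed, jointly invariant set containing $\operatorname{supp}\mu$ after a harmless modification: we replace it by the closure of the $B$-orbits of $\mu$-typical points, which is $T_{A}$-invariant modulo $\mu$.

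\textbf{Step 2 (rigidity).} By Berend's theorem, under total irreducibility and the higher rank assumption, a proper closed jointly invariant set is finite. Its hypotheses include hyperbolicity-type conditions: we use that $B$ is expanding or hyperbolic, and we may need to replace the pair by powers. So the orbit closure $Y$ of $\mu$-a.e.\ point is finite. However, we only know $Y$ is $T_{B}$-invariant and $T_A$-quasi-invariant. This is the main obstacle, exactly as in the one-dimensional proof: the $B$-orbit closure of a single point is not $T_{A}$-invariant, so Berend's theorem does not apply directly.

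To overcome it, we first try the following. Consider the map $x\mapsto\overline{O_{B}(x)}$ into the hyperspace of compact sets. It is $T_{A}$-equivariant, and we push $\mu$ forward to obtain a $T_{A}$-invariant ergodic measure on closed $B$-invariant sets. Then we study the generic set $\overline{O_B(x)}$ and its $A$-translates jointly, taking the union of the support of this pushforward. To rule out that the typical $B$-orbit closure is an infinite proper set, we invoke the toral Host theorem. If $\nu$ had positive $T_A$-entropy for some ergodic component, that theorem would force $\nu$-typical points to equidistribute under $T_{B}$, contradicting their support in the proper set $Y$. So the limit measures $\nu$ have zero entropy and sit on $Y$; joint invariance plus Berend then give that $\nu$ is supported on finitely many periodic orbits.

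\textbf{Step 3 (back to $\mu$).} Finally we argue that $\nu$ being atomic forces $\mu$ to give mass near a finite set, and then, by non-atomicity and a choice of $U$ disjoint from that finite set, yields a contradiction. Concretely, we shrink $U$ around a non-periodic point and use that $T_{B}$ expands (case a), or is hyperbolic (case b), near periodic orbits: a point whose orbit spends density-one time near a finite periodic set, while $\mu$ is non-atomic, must have excursions. We then control these excursions using the quantitative form of the Host-type equidistribution. I expect this last transfer from the averaged measure $\nu$ back to individual $\mu$-typical orbits to require the genuinely new analysis.
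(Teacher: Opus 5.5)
Your proposal has a genuine gap, and it lies exactly where the content of the theorem is. A smaller error comes first. $E_U$ is not $T_A$-invariant: your own computation gives $T_A^{-1}E_U=E_{T_A^{-1}U}$, which is a different set. What is $T_A$-invariant mod $\mu$ is the set of \emph{all} points with non-dense $B$-orbit. So you cannot conclude $\mu(E_U)=1$ for a fixed $U$, and the limits $\nu$ of $\frac1N\sum_{n<N}T_B^n\mu$ need not live on a proper closed set. The ``harmless modification'' does not repair this: the closure of the union of the $B$-orbits of points of $\supp\mu$ is jointly invariant and infinite, hence all of $\mathbb{T}^d$ by Berend.

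More seriously, even if every $\nu$ were atomic, nothing would follow for $\mu$. A non-atomic, zero-entropy $T_A$-invariant $\mu$ whose $B$-averages accumulate on an atomic measure is not excluded by any rigidity theorem. This is precisely the remark at the end of Section~\ref{subsec:Relation-with-rigidity}, which explains why even a solution of the $\times2,\times3$ problem would not suffice. Several of your steps fail for this reason:
\begin{enumerate}
\item Your claim that the orbit closure of $\mu$-a.e.\ point is finite does not follow from $\nu$ being atomic.
\item Host's theorem applied to $\nu$ is vacuous, since $\nu$ is already atomic. It cannot be applied to $\mu$, which may have zero entropy.
\item Step~3, which you defer to ``genuinely new analysis'', is the whole theorem.
\end{enumerate}

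What is missing is the paper's dichotomy and its two-point argument. Since $T_A\overline{O_B(x)}=\overline{O_B(T_Ax)}$ and $T_A$ is finite-to-one, $h_{top}(\overline{O_B(x)},T_B)$ is a.s.\ equal to a constant $\delta$.

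If $\delta>0$, the toral Host-type theorem is applied, with the roles of $A,B$ reversed, to a positive-entropy $T_B$-ergodic measure \emph{on the orbit closure}. This produces $y\in\overline{O_B(x)}$ that equidistributes under $T_A$ along a subsequence. Approximating $y$ by $T_B^nx$ and using $T_A^iT_B^nx=T_B^nT_A^ix$ shows that most $T_A^ix$ lie in $\dense_B(\varepsilon)$.

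To rule out $\delta=0$, the argument runs as follows.
\begin{enumerate}
\item Non-atomicity supplies distinct, arbitrarily close points $x,y$ in a set $E$ of uniformly generic points. Set $u=\lim B^{N(y)}(y-x)$; then $y_n-x_n=A^nB^{-[\alpha n]}u_y \bmod 1$.
\item By Theorem~\ref{thm:Linear-orbits} and Corollary~\ref{cor:theta-have-substantial-entropy}, these differences have entropy $>(1-\varepsilon)\log(1/r)$ along the visits of $T_A^nx$ to each $D_i$. This is where multiplicative independence of the eigenvalues enters.
\item Meanwhile the $x_n$ lie in $Y_i$, which has entropy $O(h)\log(1/r)$. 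Subadditivity of entropy then gives the $y_n$ entropy $>(1-2\varepsilon)\log(1/r)$.
\item Lemma~\ref{lem:measure-on-Omega-K-h-have-small-entropy} puts more than a $\frac1{2d}$-proportion of the $y_n$ outside $\Omega_{K,h}$. Since $O_B(y_n)\subseteq O_B(T_A^ny)$, the same holds for the $T_A^ny$.
\item Uniform genericity of $y\in E$ yields $\mu(\Omega_{K,h})<1-\frac1{4d}$, contradicting the choice of $K$.
\end{enumerate}
Berend's theorem plays no role in this proof; the two-point comparison is what replaces it.
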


The hyperbolicity condition is necessary because one can find matrices
$A,B$ satisfying the other assumptions, but which admit an irreducible
subspace $V$ on which the action of $A,B$ is conjugated to rotations
by irrational angles. Then $V$ admits jointly invariant non-atomic
measures supported on a closed curve, and reducing modulo one, we
have a measure on $\mathbb{T}^{d}$ violating the conclusion of the
theorem. This phenomenon is the reason for a similar expansion requirement
in Berend's theorem \cite{Berend1983}.

In dimensions $d\geq2$ it also makes sense to consider pairs of non-commuting
matrices. In fact, the Host-type result we proved in \cite{Hochman2025a}
does not require $A,B$ to commute, provided that $T_{B}$ acts totally
irreducibly on its own, satisfies some expansion assumptions, and
the eigenvalues of the matrices are pairwise multiplicatively independent.
Using this one derives Theorem \ref{thm:main-multi-d} without commutation
in the positive entropy case, but it leaves the zero-entropy case
open:
\begin{problem}
Let $A,B\in M_{d}(\mathbb{Z})$. Suppose that $B$ is hyperbolic,
$T_{B}$ acts invertibly and totally irreducibly on $\mathbb{T}^{d}$,
and $a,b$ are multiplicatively independent for every eigenvalue $a$
of $A$ and $b$ of $B$. If $\mu$ is $T_{A}$-invariant and non-atomic,
does $\mu$-a.e.~$x$ have a dense $T_{B}$-orbit?
\end{problem}

\subsection{Organization of the paper}

In the next section we introduce some basic notation. We prove Proposition
\ref{prop:minimal-sets} in Section \ref{sec:Proof-of-Proposition-minimal}.
Theorem \ref{thm:main-1-d} is proved in Section \ref{sec:Proof-of-1-d-case}
and Theorem \ref{thm:main-multi-d} is proved in Section \ref{subsec:Proof-of-Theorem-multi-d}.

\subsection{AI disclosure}

This research was carried out entirely by the author during the 2024/25
academic year and submitted for publication in December 2025. The
preprint release was delayed until September 2026 and just prior to
it, ChatGPT 5.6 Terra was used to help locate errata. Resulting corrections
were done out manually. The author takes full responsibility for the
content.

\section{\label{sec:Standing-notation}Standing notation }

For a metric space $(X,d)$, a set $Y\subseteq X$ and $\varepsilon>0$
is $\varepsilon$-dense if for every $x\in X$ there exists $y\in Y$
with $d(x,y)<\varepsilon$. If $T:X\rightarrow X$ and $k\in\mathbb{N}$,
write
\begin{align*}
\dense_{T}(\varepsilon,k) & =\{x\in X\mid\{T^{n}x\}_{n=0}^{k}\text{ is \ensuremath{\varepsilon}-dense in }X\}\\
\dense_{T}(\varepsilon) & =\{x\in X\mid\{T^{n}x\}_{n=0}^{\infty}\text{ is \ensuremath{\varepsilon}-dense in }X\}\\
 & =\bigcup_{k}\dense_{T}(\varepsilon,k)
\end{align*}
These sets are open and $\dense_{T}(\varepsilon,k)\subseteq\dense_{T}(\varepsilon,m)$
whenever $k<m$. When $T=T_{b}$ for $b\in\mathbb{N}$ we abbreviate
$\dense_{b}(\varepsilon)=\dense_{T_{b}}(\varepsilon)$, etc., and
similarly for $T=T_{B}$, $B\in M_{d}(\mathbb{Z})$.

For a compact set $Y\subseteq X$ and $r>0$, the covering number
of $Y$ at scale $r$ is
\[
\cov(Y,r)=\;\text{the minimal number of \ensuremath{r}-balls needed to cover \ensuremath{Y}}
\]
We introduce the non-standard notation for ``scale-$r$ dimension'':
\[
\dim(Y,r)=\frac{\log\cov(Y,r)}{\log(1/r)}.
\]
The box dimension of $Y$ is given by $\dim_{B}Y=\lim_{r\rightarrow0}\dim(Y,r)$
when the limit exists. We recall again that it exists when $Y$ is
a $b$-sets, and $\dim Y=\dim_{B}Y$.

The Shannon entropy of a probability measure $\mu\in\mathcal{P}(\mathbb{R}^{N})$
at scale $r>0$ is defined using an auxiliary partition $\mathcal{Q}=\{Q_{i}\}$
of $\mathbb{R}^{N}$ into cubes of side $r$ (on the line, intervals
of side $r$): 
\[
H(\mu,r)=-\sum_{i}\mu(Q_{i})\log\mu(Q_{i}).
\]
The logarithm is always in base two. The choice of $\mathcal{Q}$
affects the entropy only up to an additive constant, which can be
ignored, since in our applications we always normalize by $\log(1/r)$,
making it negligible. For a sequence $\underline{x}=(x_{n})_{n=1}^{N}$
write $H(\underline{x},r)$ for the scale-$r$ entropy of $\frac{1}{N}\sum_{n=1}^{N}\delta_{x_{n}}$.

We define entropy for measures on $\mathbb{T}^{d}$ by lifting them
to $[0,1]^{d}$ and applying the previous definition.

We rely freely on standard properties of entropy and covering numbers,
for example
\begin{equation}
H(\mu,r)\leq\log\cov(\supp\mu,r)+O(1).\label{eq:covering-bounds-entropy}
\end{equation}
or that if $f:\mathbb{R}^{d}\rightarrow\mathbb{R}^{d}$ is bi-Lipschitz
with constant $C$ then 
\[
|H(\mu,r)-H(f\mu,r)|<O(|\log C|)
\]

\section{\label{sec:Proof-of-Proposition-minimal}Proof of Proposition \ref{prop:minimal-sets}}

Let $S,T:X\rightarrow X$ continuous, commuting, onto maps of a compact
metric space. Assume that $T$ acts transitively and let $\mathcal{F}$
be a family of $S$-subsystems of $X$ that is closed to taking subsystems
and satisfies
\begin{quote}
The only jointly $S,T$-invariant sets are elements of $\mathcal{F}$
and $X$ itself.
\end{quote}
Let $Y\subseteq X$ be $S$-minimal and $Y\notin\mathcal{F}$. Our
goal is to prove that $Y$ contains a dense $G_{\delta}$ set of $T$-transitive
points.
\begin{lem}
\label{lem:T-transitive-points-move-with-S}For every $\varepsilon>0$
and $N\in\mathbb{N}$ there exists $\delta>0$ such that $x\in\dense_{T}(\delta)$
implies that $x,Sx,\ldots,S^{N}x\in\dense_{T}(\varepsilon)$.
\end{lem}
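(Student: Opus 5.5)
It suffices to treat a single application of $S$ and then iterate. If for every $\varepsilon>0$ there is $\delta'>0$ with $x\in\dense_{T}(\delta')\Rightarrow Sx\in\dense_{T}(\varepsilon)$, then the lemma follows. Choose $\delta_{N}=\varepsilon$ and, recursively, $\delta_{j-1}\leq\delta_{j}$ so that one step from $\delta_{j-1}$ lands in $\dense_{T}(\delta_{j})$. Taking $\delta=\delta_{0}$ then gives $S^{j}x\in\dense_{T}(\delta_{j})\subseteq\dense_{T}(\varepsilon)$ for all $0\leq j\leq N$, using the monotonicity $\dense_{T}(\delta_{j})\subseteq\dense_{T}(\varepsilon)$ when $\delta_{j}\leq\varepsilon$.

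For the single step I use commutation and surjectivity. Since $X$ is compact and $S$ is continuous, $S$ is uniformly continuous. Choose $\eta>0$ so that $d(u,v)<\eta$ implies $d(Su,Sv)<\varepsilon/2$. Because $S$ is onto, $S(X)=X$, so every $z\in X$ can be written as $z=Sw$ for some $w\in X$. Suppose $x\in\dense_{T}(\eta)$. Then there is $n$ with $d(T^{n}x,w)<\eta$, hence $d(ST^{n}x,z)=d(ST^{n}x,Sw)<\varepsilon/2$. By commutation, $ST^{n}x=T^{n}Sx$. So every $z$ lies within $\varepsilon/2<\varepsilon$ of the $T$-orbit of $Sx$, i.e.\ $Sx\in\dense_{T}(\varepsilon)$. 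Thus $\delta'=\eta$ works.

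One could shortcut the iteration by using uniform continuity of $S,S^{2},\ldots,S^{N}$ simultaneously. Take $\delta$ to be the minimum of the moduli of continuity of $S^{j}$ at tolerance $\varepsilon/2$, and run the same argument with $S^{j}$, which is onto and commutes with $T$. This is the cleaner version I would write.

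There is no real obstacle. The only points needing care are that surjectivity of $S$ is genuinely needed, so that $S$ maps an $\eta$-dense set onto an $\varepsilon$-dense set, and that $\delta$ is uniform in $x$, which comes from compactness via uniform continuity. Transitivity of $T$ and hypothesis $({*})$ are not used here.
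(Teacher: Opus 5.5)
Your proposal is correct, and the ``cleaner version'' you say you would write is exactly the paper's argument. The paper chooses $\delta$ so that $d(u,v)<\delta$ forces $d(S^{j}u,S^{j}v)<\varepsilon$ for all $0\le j\le N$, lifts each target point through the onto map $S^{j}$, and uses $S^{j}T^{k}=T^{k}S^{j}$. Your one-step iteration scheme is a valid but unnecessary variant of the same idea.
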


\begin{proof}
Fix $\varepsilon>0$ and $N\in\mathbb{N}$, and use continuity of
$S$ to find a $\delta>0$ such that $d(x,y)<\delta$ implies $(S^{n}x,S^{n}y)<\varepsilon$
for $n=0,1,\ldots,N$.

Suppose that $x\in\dense_{T}(\delta)$ and fix $n\in\{0,\ldots,N\}$.
Given $y\in X$, choose $z\in S^{-n}y$ (this is possible since $S^{n}$
is onto). Since $x\in\dense_{T}(\delta)$ we can find $k\geq0$ such
that $d(T^{k}x,z)<\delta$. By choice of $\delta$ this implies $d(S^{n}T^{k}x,S^{n}z)<\varepsilon$,
which, since $S^{n}z=y$ and $S,T$ commute, is the same as $d(T^{k}S^{n}x,y)<\varepsilon$. 

Summarizing, for all $y\in X$ we found $k\geq0$ with $d(T^{k}S^{n}x,y)<\varepsilon$,
so $S^{n}x\in\dense_{T}(\varepsilon)$.
\end{proof}
Recall that $Y\subseteq X$ is an $S$-minimal subsystem and $Y\notin\mathcal{F}$.
For $y\in Y$ let 
\begin{align*}
Z_{y} & =\overline{\{T^{n}y\}_{n\geq0}}
\end{align*}
and set
\[
Z=\overline{\bigcup_{y\in Y}Z_{y}}
\]

\begin{lem}
\label{lem:Z-equals-X}$Z=X$.
\end{lem}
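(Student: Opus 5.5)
We show that $Z$ is a jointly $S,T$-invariant closed set which contains $Y$, and then use the hypothesis ({*}) to force $Z=X$.

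\textbf{Step 1: $Z$ is closed and contains $Y$.} $Z$ is closed by definition. Since $y\in Z_{y}$ for every $y\in Y$, we have $Y\subseteq Z$.

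\textbf{Step 2: $TZ\subseteq Z$.} Each $Z_{y}$ is $T$-invariant, since $T$ maps the orbit $\{T^{n}y\}_{n\geq0}$ into itself and $T$ is continuous. Hence the union $\bigcup_{y\in Y}Z_{y}$ is $T$-invariant. By continuity of $T$ its closure $Z$ is $T$-invariant as well.

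\textbf{Step 3: $SZ\subseteq Z$.} Here we use commutation. For $y\in Y$ we have $ST^{n}y=T^{n}Sy$, so $S$ maps the $T$-orbit of $y$ into the $T$-orbit of $Sy$. By continuity of $S$ this gives $SZ_{y}\subseteq Z_{Sy}$. Since $Y$ is an $S$-subsystem, $Sy\in Y$, so $Z_{Sy}\subseteq Z$. Therefore $S\bigl(\bigcup_{y}Z_{y}\bigr)\subseteq Z$, and by continuity $SZ\subseteq Z$.

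\textbf{Step 4: apply ({*}).} Steps 2 and 3 show that $Z$ is jointly $S,T$-invariant, so by ({*}) either $Z\in\mathcal{F}$ or $Z=X$. Suppose $Z\in\mathcal{F}$. Then $Y$ is a closed $S$-invariant subset of $Z$, i.e.\ a subsystem of $Z$. Since $\mathcal{F}$ is closed under taking subsystems, $Y\in\mathcal{F}$, which contradicts the standing assumption $Y\notin\mathcal{F}$. Hence $Z=X$.

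\textbf{Main obstacle.} There is no real obstacle. The only point needing care is Step 3, namely the inclusion $SZ_{y}\subseteq Z_{Sy}$, which rests on $ST=TS$ together with continuity of $S$. The argument uses neither minimality of $Y$ nor transitivity of $T$; those hypotheses are needed only later in the proof of Proposition \ref{prop:minimal-sets}.
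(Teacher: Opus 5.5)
Your proof is correct and follows the paper's argument. You show $Z$ is closed and jointly $S,T$-invariant, using $ST=TS$ to get $SZ_y\subseteq Z_{Sy}$, then invoke ({*}), and rule out $Z\in\mathcal{F}$ because $Y\subseteq Z$ is a subsystem not in $\mathcal{F}$. The one difference is that you get $Y\subseteq Z$ directly from $y\in Z_y$ for every $y\in Y$, whereas the paper detours through $S$-invariance and minimality of $Y$; you are right that minimality is not needed for this lemma.
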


\begin{proof}
Since $S,T$ commute, $SZ_{y}=Z_{Sy}$, so $Z$ is $S$-invariant.
Alsom $TZ_{y}\subseteq Z_{y}$, so $Z$ is $T$-invariant. Thus, either
$Z=X$ or $Z\in\mathcal{F}$. The latter is ruled out since $y\in Z_{y}\subseteq Z$,
hence by $S$-invariance, $\{S^{n}y\}_{n\geq0}\subseteq Z$, and by
minimality, $Y=\overline{\{S^{n}y\}}\subseteq Z$. Since $\mathcal{F}$
is closed to taking subsystems and $Y\notin\mathcal{F}$ by assumption,
we must have $Z\notin\mathcal{F}$. 
\end{proof}
\begin{lem}
\label{lem:T-eplilon-dense-points-are-dense-in-Y}For all $\varepsilon>0$
and $r>0$, the set $\dense_{T}(\varepsilon)\cap Y$ is $r$-dense
in $Y$.
\end{lem}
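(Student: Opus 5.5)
The plan is to combine Lemma \ref{lem:Z-equals-X} with the transitivity of $T$, and then use Lemma \ref{lem:T-transitive-points-move-with-S} together with $S$-minimality of $Y$ to spread the conclusion over all of $Y$.

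\emph{Step 1: $\dense_T(\delta)$ meets $Y$ for every $\delta>0$.} Since $T$ is transitive, there is a $T$-transitive point $x_{0}\in X$. The set $\dense_{T}(\delta/2)$ is open, contains $x_0$, and hence contains some ball $B(x_0,\eta)$. By Lemma \ref{lem:Z-equals-X}, $\bigcup_{y\in Y}Z_{y}$ is dense in $X$, so there are $y\in Y$ and $n\geq0$ with $d(T^{n}y,x_{0})<\eta$. Therefore $T^{n}y\in\dense_{T}(\delta/2)$, i.e.\ the forward $T$-orbit of $T^n y$ is $\delta/2$-dense in $X$. This orbit is contained in the forward $T$-orbit of $y$, so $y\in\dense_{T}(\delta/2)\subseteq\dense_T(\delta)$. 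Thus $\dense_{T}(\delta)\cap Y\neq\emptyset$ for every $\delta>0$.

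\emph{Step 2: compactness and minimality.} Since $Y$ is $S$-minimal and compact, every $S$-orbit in $Y$ is $r$-dense. I will use the standard uniform version: there is $N$ such that for every $y\in Y$ the finite orbit segment $\{y,Sy,\ldots,S^{N}y\}$ is $r$-dense in $Y$. To prove it, fix $w\in Y$ and note that $Y=\bigcup_{n}S^{-n}B(w,r)$ by minimality. Cover $Y$ by finitely many $r/2$-balls with centers $w_{1},\ldots,w_{m}\in Y$. The sets $\bigcup_{n\le N}S^{-n}B(w_{i},r/2)$ are open and increase with $N$ to cover $Y$, so compactness gives a single $N$ that works for all $i$. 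Then for any $y\in Y$ and any $w\in Y$, pick $w_i$ with $d(w,w_i)<r/2$ and $n\le N$ with $d(S^ny,w_i)<r/2$, so $d(S^n y,w)<r$.

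\emph{Step 3: conclusion.} Given $\varepsilon$ and $r$, take this $N$ and the $\delta$ supplied by Lemma \ref{lem:T-transitive-points-move-with-S} for $\varepsilon$ and $N$. By Step 1 there is $y\in Y\cap\dense_{T}(\delta)$. Lemma \ref{lem:T-transitive-points-move-with-S} gives $S^{n}y\in\dense_{T}(\varepsilon)$ for all $0\le n\le N$, and these points lie in $Y$ because $Y$ is $S$-invariant. By Step 2 they form an $r$-dense subset of $Y$, which proves the lemma.

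The only delicate point is Step 1. There one must note that density of $\bigcup Z_y$ yields points of $Y$ whose $T$-orbit passes near a transitive point, and then use openness of $\dense_T$ together with forward-orbit inclusion. Steps 2 and 3 are routine.
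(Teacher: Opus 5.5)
Your proof is correct and follows the paper's argument: uniform $S$-minimality gives $N$, Lemma \ref{lem:T-transitive-points-move-with-S} gives $\delta$, and $Z=X$ together with a $T$-transitive point $x_0$ produces $y\in Y\cap\dense_T(\delta)$, whose first $N$ iterates under $S$ are $r$-dense in $Y$. Your version is slightly more careful than the paper's, which asks only for $d(T^n y,x_0)<\delta$ where it needs $T^n y$ to lie in an open neighborhood of $x_0$ contained in $\dense_T(\delta)$, and which also takes the uniform minimality statement for granted.
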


\begin{proof}
Fix $\varepsilon,r>0$. By $S$-minimality of $Y$, there exists $N$
such that $\{S^{n}x\}_{n=0}^{N}\subseteq Y$ is $r$-dense in $Y$
for every $x\in X$. 

Apply Lemma \ref{lem:T-transitive-points-move-with-S} with parameters
$\varepsilon$ and $N$ to obtain $\delta>0$ as in the lemma.

Now choose a point $x_{0}\in X$ with dense $T$-orbit, as we can
do by $T$-transitivity. By continuity of $T$, every $x\in X$ sufficiently
close to $x_{0}$ will have $\delta$-dense $T$-orbit. Since $Z=X$
is the closure of the sets $Z_{y}$, there exists $y\in Y$ with $d(Z_{y},x_{0})<\delta$.
And since $Z_{y}=\overline{\{T^{n}y\}_{n\geq0}}$, there exists $n$
with $d(T^{n}y,x_{0})<\delta$, so $y\in\dense_{T}(\delta)$.

Finally, by choice of $\delta$, we have $\{S^{n}y\}_{0\leq n\leq N}\subseteq Y\cap\dense_{T}(\varepsilon)$,
and this sequence is $r$-dense in $Y$, as we claimed.
\end{proof}
We can now complete the proof of Proposition \ref{prop:minimal-sets}.
For, given $\varepsilon>0$, the fact that $\dense_{T}(\varepsilon)\cap Y$
is $r$-dense in $Y$ for every $r>0$ implies that $\dense_{T}(\varepsilon)\cap Y$
is dense in $Y$. It is also open, since $\dense_{T}(\varepsilon)$
is open in $X$, so by Baire's theorem, the intersection 
\[
D=\bigcap_{n\geq1}\dense_{T}(1/n)\cap Y
\]
is a dense $G_{\delta}$ set in $Y$ consisting of $T$-transitive
points. This is what was claimed.

\section{\label{sec:Proof-of-1-d-case}Proof of Theorem \ref{thm:main-1-d}}

\subsection{\label{subsec:preparations-1D}Preparations}

We collect a number of dynamical and analytical facts. With an eye
towards later sections we state some of them in higher dimensions.

\begin{lem}
\label{lem:small-dimension-propogates-for-a-sets}Let $2\leq a\in\mathbb{Z}$
and let $Y\subseteq[0,1]$ be an $a$-set. Suppose that $r>0$ and
$c\geq1$. Then $\dim(Y,r^{c})<(1+O(\frac{1}{c}))\dim(Y,r)$.
\end{lem}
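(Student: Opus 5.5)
The plan is to use submultiplicativity of covering numbers of $a$-sets across scales. I will work with $a$-adic scales; an arbitrary $r$ changes covering numbers only by bounded factors, since $\cov(Y,r)\asymp\cov(Y,r')$ when $r/r'\in[a^{-1},a]$. For $r\le 1/a$ this changes $\dim(Y,r)$ by at most $O(1/\log(1/r))$, and I will absorb that into the error term. (For $r$ close to $1$ the normalization $\log(1/r)$ degenerates, and I will treat that range separately or assume $r$ small.) So write $r=a^{-n}$, and let $N_k$ denote the number of $a$-adic intervals of length $a^{-k}$ meeting $Y$. Up to constants, $\cov(Y,a^{-k})\asymp N_k$.

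The key step is the inequality $N_{k+m}\le N_k N_m$, which comes from invariance of $Y$. Let $I$ be an $a$-adic interval of length $a^{-k}$ meeting $Y$. The map $T_a^k$ sends $I$ affinely onto $[0,1]$, scaling by $a^k$. Since $T_a^k Y\subseteq Y$, the set $T_a^k(Y\cap I)$ is contained in $Y$. It therefore meets at most $N_m$ intervals of length $a^{-m}$, and pulling these back gives at most $N_m$ intervals of length $a^{-(k+m)}$ inside $I$ meeting $Y$. Summing over the $N_k$ choices of $I$ gives the claim. Iterating, and writing $cn=qn+s$ with $q=\lfloor c\rfloor$ and $0\le s<n$, I get
\[
N_{cn}\le N_n^{q}\,N_s\le N_n^{q+1},
\]
because $N_s\le N_n$ by monotonicity in scale.

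Taking logarithms and dividing by $cn\log a=\log(1/r^{c})$ gives
\[
\dim(Y,r^{c})\le\frac{q+1}{c}\cdot\frac{\log N_n}{n\log a}+\frac{O(1)}{cn}\le\Bigl(1+\frac{1}{c}\Bigr)\dim(Y,r)+O\Bigl(\frac{1}{n}\Bigr).
\]
The main obstacle is the additive error coming from the comparison constants between $N_k$ and $\cov$. It is $O(1/\log(1/r))$, whereas the statement asks for a purely multiplicative error. If $\dim(Y,r)$ is bounded below, say because $N_n\ge 2$ (which holds whenever $Y$ is not a single point at that scale), then $\dim(Y,r)\gtrsim 1/n$, and the additive term becomes multiplicative, though not necessarily of order $1/c$. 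To get exactly $O(1/c)$, I would avoid constants altogether by working with the exact count $N_k$, for which submultiplicativity has no constant. I would then interpret $\dim(Y,r)$ through $N_n$, since the two agree up to the stated error, and the inequality reads $\log N_{cn}\le(q+1)\log N_n$ with no additive term at all. The degenerate case $N_n=1$ forces $Y$ to be contained in a single interval; that case is handled directly, because it then gives $N_{cn}=1$ as well.
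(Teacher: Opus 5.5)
Your route is the same as the paper's: submultiplicativity of covering counts from $T_a^kY\subseteq Y$, then iteration, then interpolation to general $r$ and $c$. Your key step, $N_{k+m}\le N_kN_m$ for the exact $a$-adic counts, is correct.

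\textbf{The gap.} It is in your last paragraph. The statement is about $\dim(Y,r)=\log\cov(Y,r)/\log(1/r)$. The only link you have to $N_n$ is $\cov(Y,a^{-n})\le N_n\le 3\cov(Y,a^{-n})$. That comparison constant enters once per factor $N_n$, i.e.\ $q+1$ times, so the additive error is $O(1/n)=O(1/\log(1/r))$, not $O(1/(cn))$ (your first display is too optimistic here; your second has it right). For $c\gg\log(1/r)$, a multiplicative error $O(\frac1c)\dim(Y,r)$ is much smaller than $1/\log(1/r)$. So ``the two agree up to the stated error'' is false, and rewriting $\dim(Y,r)$ through $N_n$ does not remove the additive term; it only moves it. What your computation actually proves is
\[
\dim(Y,r^{c})\le\Bigl(1+O\bigl(\tfrac{1}{c}\bigr)\Bigr)\dim(Y,r)+O_a\Bigl(\frac{1}{\log(1/r)}\Bigr)\qquad(r\le 1/a),
\]
together with the exact bound $\log N_{cn}\le(\lfloor c\rfloor+1)\log N_n$ for the $a$-adic counts.

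\textbf{The statement itself is false as written}, so no argument can close this gap. Take $Y=[0,1]$ and fix $r<1/4$.
\begin{itemize}
\item $\cov([0,1],r)\le\frac{1}{2r}+2<\frac1r$, so $\dim([0,1],r)<1$.
\item $\cov([0,1],r^c)\ge\frac{1}{2r^c}$, so $\dim([0,1],r^c)\ge1-\frac{\log 2}{c\log(1/r)}\to1$ as $c\to\infty$.
\end{itemize}
Hence $\dim(Y,r^c)>(1+\frac{C}{c})\dim(Y,r)$ for all large $c$, whatever the constant $C$. Separately, for a one-point $Y$ both sides vanish and the strict inequality fails.

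The paper's proof has the same hole. It asserts $\cov(Y,a^{-2n})\le\cov(Y,a^{-n})^2$, which fails for $Y=[0,1]$ and large $n$: the left side is about $a^{2n}/2$ and the right side about $a^{2n}/4$. Its approximation of general $r$ by powers of $a$ again costs an additive $O(1/\log(1/r))$.

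The right fix is to restate the lemma with the additive term, or directly in terms of $N_k$, which your argument does establish. That weaker form is enough for the later use in Step 2, where $r_1$ can be taken small enough that an error of order $1/\log(1/r_1)$ is negligible compared with $\lambda$.
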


\begin{proof}
Consider an interval $I=[\frac{p}{a^{n}},\frac{p+1}{a^{n}})\subseteq[0,1)$.
By $T_{a}^{n}Y\subseteq Y$, the set $Y\cap I$ is a scaled copy of
a subset of $Y$. Hence $\cov(Y\cap I,a^{-2n})\leq\cov(Y,a^{-n}$),
which implies $\cov(Y,a^{-2n})\leq\cov(Y,a^{-n})^{2}$, and by induction,
$\cov(Y,a^{-kn})\leq\cov(Y,a^{-n})^{k}$. For $n>m$ we get 
\[
\dim(Y,a^{-n})\leq\dim(Y,a^{-([n/m]+1)m})\leq(1+\frac{m}{n})\dim(Y,a^{-m})
\]
The lemma follows (with constant $2$) in the case $r=a^{-m}$ and
$c=n/m$. For general $r,c$ approximate them by the nearest powers
of $a$, and estimating the error using $r<a^{-2}$.

A more detailed proof can be derived from Lemma \ref{lem:cr-bound-in-terms-of-HK}
below.
\end{proof}
\begin{lem}
\label{lem:full-entropy-implies-density}For every $2\leq b\in\mathbb{N}$
and $\varepsilon>0$ there exist $\lambda,r_{0}>0$ such that for
all $0<r<r_{0}$ and $\mu\in\mathcal{P}([0,1])$, 
\[
\frac{1}{\log(1/r)}H(\mu,r)>1-\lambda\qquad\Rightarrow\qquad\mu(\dense_{b}(\varepsilon))>1-\varepsilon.
\]
\end{lem}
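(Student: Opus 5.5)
The strategy is to show that the complement of $\dense_{b}(\varepsilon)$ is contained in a set of small scale-$r$ dimension, and that a measure of nearly full scale-$r$ entropy cannot give large mass to such a set.

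\textbf{Step 1: the bad set is dimensionally small.} Fix $\varepsilon$ and choose $m$ with $b^{-m}<\varepsilon/2$. If $x\notin\dense_{b}(\varepsilon)$, then some $b$-adic interval $J$ of length $b^{-m}$ is never visited by the orbit. Hence the base-$b$ expansion of $x$ omits the corresponding word $w$ of length $m$. So the bad set is contained in the finite union $\bigcup_{w}E_{w}$, where $E_{w}$ is the closed $b$-set of points whose expansion avoids $w$. A standard subshift count gives $\cov(E_{w},b^{-n})\leq C\cdot\rho^{n}$ with $\rho<b$, since avoiding $w$ forces a loss in every block of length $m$. Therefore $\dim(E_{w},r)\leq1-2\lambda_{0}$ for all small $r$, with $\lambda_{0}=\lambda_{0}(b,\varepsilon)>0$.

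\textbf{Step 2: entropy cannot see a small set.} Set $E=\bigcup_{w}E_{w}$ and $p=\mu(E)$. Write $\mu=p\mu_{E}+(1-p)\mu'$ with $\mu_{E}$ supported on $E$. By concavity and almost-convexity of entropy,
\[
H(\mu,r)\leq pH(\mu_{E},r)+(1-p)H(\mu',r)+1.
\]
By (\ref{eq:covering-bounds-entropy}) and Step 1, $H(\mu_{E},r)\leq(1-2\lambda_{0})\log(1/r)+O(1)$, while trivially $H(\mu',r)\leq\log(1/r)+O(1)$. Dividing by $\log(1/r)$ gives
\[
\frac{H(\mu,r)}{\log(1/r)}\leq1-2\lambda_{0}p+o(1)\quad\text{as }r\to0,
\]
uniformly in $\mu$. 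If the left side exceeds $1-\lambda$ with $\lambda=\lambda_{0}\varepsilon$, and $r_{0}$ is small enough that the $o(1)$ term is below $\lambda_{0}\varepsilon$, then $p<\varepsilon$. Hence $\mu(\dense_{b}(\varepsilon))\geq1-p>1-\varepsilon$.

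\textbf{Main obstacle.} The only real content is the uniform covering bound in Step 1, namely the strict exponential deficit for sequences avoiding a fixed word. I would prove it directly: split positions into disjoint consecutive blocks of length $m$. Each block has at most $b^{m}-1$ allowed fillings, so $\cov(E_{w},b^{-km})\leq(b^{m}-1)^{k}$. This yields the deficit with $2\lambda_{0}=-\log_{b}(1-b^{-m})/m$. Interpolating between scales $b^{-km}$ costs only $O(1/\log(1/r))$, which is absorbed by choosing $r_{0}$ small.
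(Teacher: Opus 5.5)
Your proposal is correct and is essentially the paper's proof: the same block-counting bound for words avoiding a fixed $m$-block, combined with the same convex decomposition of $\mu$ into a low-entropy bad part and a trivially bounded good part. The only minor difference is that the paper works with the finite-horizon set $\dense_{b}(\varepsilon,N)$ at the matched scale $b^{-N}$, counting length-$N$ digit strings that miss a block, whereas you bound the covering numbers of the full word-avoiding sets $E_w$ at all small scales, which suffices for the lemma as stated.
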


\begin{proof}
Let $\varepsilon>0$ and set $\ell=\left\lceil \log_{b}(1/\varepsilon)\right\rceil +1$.
Write $B=\{0,\ldots,b-1\}$, let $x\in[0,1]$ and let $x_{1}x_{2},\ldots$
denote the base-$b$ digits of $x$. 

Fix $N\in\mathbb{N}$. If $x\in[0,1]\setminus\dense_{b}(\varepsilon,N)$,
then some block $w\in B^{\ell}$ is not a subword of $x_{1}\ldots x_{N}$.
Let us bound the number of sequences $x_{1}\ldots x_{N}\in B^{N}$
with this property: 
\begin{itemize}
\item There are $b^{\ell}-1$ choices for $w\in B^{\ell}$. 
\item Given $w\in B^{\ell}$, if $x_{1}\ldots x_{N}\in B^{N}$ does not
have $w$ as a subword, then it is a concatenations of $[N/\ell]$
sequences from $B^{\ell}\setminus\{a\}$ followed by at most $\ell$
additional digits. There are $(b^{\ell}-1)^{N/\ell}(1+o(1))$ choices
of such sequences.
\end{itemize}
Thus, there is a constant $\delta>0$ depending only on $\ell$ (hence
on $\varepsilon$) such that, as $N\rightarrow\infty$, the number
of $x_{1}\ldots x_{N}\in B^{N}$ corresponding to $x\in[0,1]\setminus\dense_{b}(\varepsilon,N)$
is at most 
\[
b^{\ell}\cdot(b^{\ell}-1)^{N/\ell}(1+o(1))\leq b^{(1-\delta)N}
\]
It follows that every $\nu\in\mathcal{P}([0,1]\setminus\dense_{b}(\varepsilon,N))$
is supported on at most $b^{(1-\delta)N}$ intervals of length $b^{-N}$
(each corresponding to one sequence $x_{1}\ldots x_{N}$ as above),
so 
\[
\frac{1}{\log b}H(\nu,b^{-N})\leq(1-\delta)N.
\]
To complete the argument, note that every $\mu\in\mathcal{P}([0,1])$
can be written as a convex combination of a probability measure $\nu$
on $[0,1]\setminus\dense_{b}(\varepsilon,N)$ and $\eta$ on $\dense_{b}(\varepsilon,N)$.
At scale $b^{-N}$ we have seen that the entropy of the former is
at most $(1-\delta)N\log b$, while that of the latter is trivially
bounded by $N\log b$. Therefore there is an $N_{0}$ such that if
$N>N_{0}$ and $H(\mu,b^{-N})>(1-\varepsilon\delta/2)N\log b$, then
the contribution of $\nu$ to the entropy must be less than an $\varepsilon$-fraction
of the mass of $\mu$. This is what we wanted to show, using the dictionary
$\lambda=\varepsilon\delta/2$, $r_{0}=b^{-N_{0}}$, $r=b^{-N}$ and
a standard interpolation argument to deal with $r$ that are not of
the form $b^{-N}$.
\end{proof}
Let $\left\Vert \cdot\right\Vert _{TV}$ denotes the total variation
distance between measures. It is well known that for $\mu,\nu\in\mathcal{P}([-R,R]^{d})$,
\begin{equation}
\left|H(\mu,r)-H(\nu,r)\right|\leq\left\Vert \mu-\nu\right\Vert _{TV}\log(R/r)+H(\left\Vert \mu-\nu\right\Vert _{TV})\label{eq:total-variation-entropy-bound}
\end{equation}
where $H(\underline{t})=-t\log t-(1-t)\log(1-t)$. 

A measure $\theta\in\mathcal{P}(\mathbb{R}^{d})$ has exact dimension
$\delta$, and we write $\dim\theta=\delta$, if
\begin{equation}
\lim_{r\rightarrow0}\frac{\log\theta(B_{r}(x))}{\log r}=\delta\qquad\text{for \ensuremath{\theta}-a.e. \ensuremath{x}}\label{eq:pointwise-dim}
\end{equation}
Equivalently, the limit holds with $\mathcal{Q}_{r}(x)$ instead of
$B_{r}(x)$, where $\mathcal{Q}_{r}(x)$ is the $r$-cube containing
$x$ as defined in the definition of scale-$r$ entropy. 

When $\dim\theta=\delta$ we have $\frac{1}{\log(1/r)}H(\theta,r)\rightarrow\delta$
(essentially one integrates (\ref{eq:pointwise-dim})). Also, $\dim\theta=\delta$
passes from $\theta$ to any measure $\eta\ll\theta$, so the entropy
limit holds for $\eta$ too. The following lemma is a variant of this
for a converging sequence of measures.
\begin{lem}
\label{lem:irrational-rotation-along-positive-density-set}Let $\theta,\theta_{n},\eta_{n}\in\mathcal{P}([-R,R]^{d})$
and assume $\dim\theta=\delta$ exists. Suppose that $\theta_{n}\rightarrow\theta$
weak-{*} and that $\eta_{n}\ll\theta_{n}$ with $d\eta_{n}/d\theta_{n}\leq C$
for some constant $C$. Then, given $\rho>0$, for all sufficiently
small $r>0$, for all sufficiently large $N$, 
\[
\frac{1}{\log(1/r)}H(\eta_{n},r)>\delta-\rho
\]
\end{lem}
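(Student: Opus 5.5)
The plan is to bound the entropy of $\eta_n$ from below cube by cube. Exact dimensionality of $\theta$ says that, at a fixed small scale $r$, most of the $\theta$-mass sits in $r$-cubes of $\theta$-measure at most about $r^{\delta}$. Weak-$*$ convergence transfers this to $\theta_n$, since only finitely many cubes of a fixed scale meet $[-R,R]^d$. The density bound $d\eta_n/d\theta_n\le C$ then transfers it to $\eta_n$, at the cost of only an additive $\log C$ and a small loss of mass. (I read ``for all sufficiently large $N$'' in the statement as ``for all sufficiently large $n$''.)

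\emph{Step 1: choosing the partition.} Fix $\rho>0$ and a small $\varepsilon>0$ to be chosen in terms of $\rho$ and $\delta$. Only countably many hyperplanes orthogonal to a coordinate axis can carry positive $\theta$-mass. Hence for each $r$ we can translate the grid $\mathcal{Q}_r$ so that the boundaries of all its cells are $\theta$-null. By the remark in Section~\ref{sec:Standing-notation}, changing the grid alters $H(\cdot,r)$ only by $O(1)$, which is negligible after dividing by $\log(1/r)$. Each translated cube $Q\ni x$ satisfies $Q\subseteq B_{\sqrt d\, r}(x)$. So exact dimension in the ball form (\ref{eq:pointwise-dim}) gives, for $\theta$-a.e.\ $x$ and all small $r$,
\[
\theta(\mathcal{Q}_r(x))\le\theta(B_{\sqrt d r}(x))\le r^{\delta-\rho/4}.
\]
By Egorov's theorem there is $r_0$ such that for all $r<r_0$ the set of \emph{bad} cubes $\mathcal{B}_r=\{Q\in\mathcal{Q}_r:\theta(Q)>r^{\delta-\rho/4}\}$ satisfies $\theta(\bigcup\mathcal{B}_r)<\varepsilon/(2C)$.

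\emph{Step 2: passing to $\theta_n$ and $\eta_n$.} Fix such an $r$. Only finitely many cubes of $\mathcal{Q}_r$ meet $[-R,R]^d$, and each has $\theta$-null boundary. Weak-$*$ convergence therefore gives $\theta_n(Q)\to\theta(Q)$ for each of them. So for all large $n$ we get $\theta_n(\bigcup\mathcal{B}_r)<\varepsilon/C$, and $\theta_n(Q)\le 2r^{\delta-\rho/4}$ for every good cube $Q$. Using $\eta_n\le C\theta_n$, it follows that $\eta_n(\bigcup\mathcal{B}_r)<\varepsilon$ and $\eta_n(Q)\le 2Cr^{\delta-\rho/4}$ on good cubes. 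Dropping the bad cubes, which can only decrease the entropy sum, we obtain
\[
H(\eta_n,r)\ge\sum_{Q\text{ good}}\eta_n(Q)\log\frac{1}{\eta_n(Q)}\ge(1-\varepsilon)\Big((\delta-\tfrac{\rho}{4})\log\tfrac1r-\log(2C)\Big).
\]

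\emph{Step 3: choosing constants.} Divide by $\log(1/r)$ and take $\varepsilon$ small in terms of $\rho$ and $\delta$. Then take $r$ small enough to absorb $\log(2C)/\log(1/r)$ and the $O(1)$ coming from the grid translation. Together these give $\frac{1}{\log(1/r)}H(\eta_n,r)>\delta-\rho$ for all large $n$.

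The main obstacle is the boundary issue in Step~2: weak-$*$ convergence does not control the masses of individual cubes in general. I handle it by the translated grid of Step~1, which must also be reconciled with the cube form of exact dimension; comparing cubes with balls of radius $\sqrt d\,r$ does this. The rest is bookkeeping in the order of quantifiers: $\varepsilon$ is fixed first, then $r$, then $n$.
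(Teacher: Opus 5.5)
Your proposal is correct and follows essentially the paper's argument: exact dimension at scale $r$, a grid with $\theta$-null boundaries so that weak-$*$ convergence controls the finitely many cube masses, then transfer to $\eta_n$ via $d\eta_n/d\theta_n\le C$. The only differences are cosmetic. You drop the bad cubes directly from the entropy sum, whereas the paper conditions $\eta_n$ on the good set and invokes the total-variation bound (\ref{eq:total-variation-entropy-bound}). You also obtain the cube upper bound by comparing cubes with balls of radius $\sqrt d\,r$, which uses only the easy direction of the cube/ball equivalence and handles the $r$-dependent translation cleanly.
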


\begin{proof}
Fix $\theta,\theta_{n},\eta_{n},C$ as in the statement. We can assume
that $\theta(\partial\mathcal{Q}_{r}(x))=0$ for all $r,x$, since
a random translate of the partitions in question satisfy this, and
the effect on entropy is negligible as $r\rightarrow0$.

Since $\dim\theta=\delta$, by (\ref{eq:pointwise-dim}) there exist
a function $\varepsilon(r)\rightarrow0$ as $r\rightarrow0$ such
that 
\[
\mu\left\{ x\;\left|\;|\frac{\log\theta(\mathcal{Q}_{r}(x))}{\log(1/r)}-\delta|<\varepsilon(r)\right.\right\} >1-\varepsilon(r)
\]
Let $E_{r}$ denote the set in the last equation, and $F_{r}=\mathbb{R}^{d}\setminus E_{r}$,
so $\theta(F_{r})<\varepsilon(r)$. Note that $E_{r},F_{r}$ are unions
of elements of $\mathcal{Q}_{r}$, so $\theta(\partial E_{r})=\theta(\partial F_{r})=0$.

Fix $r>0$ which we shall take to be small later. All expressions
$o(1)$ below are understood as $N\rightarrow\infty$, and may depend
on all previous parameters.

Define $\nu_{n}$ to be $\eta_{n}$ conditioned on $E_{r}$, that
is, $\nu_{n}=\frac{1}{\eta_{n}(E_{r})}\eta_{n}|_{E_{r}}$. 

Since $\theta_{n}\rightarrow\theta$ we have $\theta_{n}(F_{r})=\theta(F_{r})+o(1)<\varepsilon(r)+o(1)$.
Hence $\eta_{n}(F_{r})<C\theta_{n}(F_{r})<C\varepsilon(r)+o(1)$.
It follows that
\begin{equation}
\left\Vert \eta_{n}-\nu_{n}\right\Vert _{TV}<2C\varepsilon(r)+o(1)\label{eq:TV-estimate-for-eta-n}
\end{equation}
Moreover, since $d\nu_{n}/d\eta_{n}\leq1/\eta_{n}(E_{r})$,
\[
\frac{d\nu_{n}}{d\theta_{n}}<\frac{d\nu_{n}}{d\eta_{n}}\cdot\frac{d\eta_{n}}{d\theta_{n}}<\frac{1}{\eta_{n}(E_{r})}\cdot C=\frac{C}{1-C\varepsilon(r)-o(1)}
\]
Assuming, as we may, that $r$ is small enough and $N$ large enough
that $C\varepsilon(r)+o(1)<1/2$, we conclude that $d\nu_{n}/d\theta_{n}\leq2C$.
Hence, using $\theta_{n}\rightarrow\theta$ again, for $x\in E_{r}$
we get 
\[
\nu_{n}(\mathcal{Q}_{r}(x))\leq2C\cdot\theta_{n}(\mathcal{Q}_{r}(x))=2C\cdot\theta(\mathcal{Q}_{r}(x))+o(1)<2Cr^{\delta-\varepsilon(r)}+o(1)
\]
so, since $\nu_{n}$ is supported on $E_{r}$, 
\begin{align}
H(\nu_{n},r) & =-\int_{E_{r}}\log\nu_{n}(\mathcal{Q}_{r}(x))\,d\nu_{n}(x)\nonumber \\
 & >(\delta-\varepsilon(r))\log(R/r)-2C+o(1)\label{eq:entropy-estimate-for-nu-n}
\end{align}
Applying (\ref{eq:total-variation-entropy-bound}) to $\eta_{n},\nu_{n}$
and using (\ref{eq:TV-estimate-for-eta-n}) and (\ref{eq:entropy-estimate-for-nu-n}),
we get 
\begin{align*}
\frac{1}{\log(1/r)}H(\eta_{n},r) & >\delta-\varepsilon(r)-(2C\varepsilon(r)+o(1))-\frac{H(2C\varepsilon(r)+o(1))}{\log(1/r)}
\end{align*}
When $r$ is small enough, for all $N$ large this becomes $>\delta-\rho$,
as claimed.
\end{proof}

\subsection{\label{subsec:Proof-of-1-d}Proof of Theorem \ref{thm:main-1-d}}

Let $2\leq a,b\in\mathbb{N}$ be multiplicatively independent integers
and let $\mu\in\mathcal{P}([0,1])$ be an ergodic and non-atomic $a$-measure.

Consider the function $\Delta_{b}:[0,1]\rightarrow[0,1]$ given by
\[
\Delta_{b}(x)=\dim\overline{O_{b}(x)}.
\]
Since $T_{a}$ commutes with $T_{b}$ we have $T_{a}(\overline{O_{b}(x)})=\overline{O_{b}(T_{a}x)}$,
and $T_{a}$ preserves dimension since it is finite-to-one and locally
bi-Lipschitz. It follows that $\Delta_{b}$ is $T_{a}$-invariant,
and it is measurable, so by ergodicity of $\mu$, it is $\mu$-a.s.~constant.
Let 
\[
\delta=\;\text{the \ensuremath{\mu}-almost sure value of }\Delta_{b}.
\]

We prove the theorem in two steps: First, we show that it holds when
$\delta>0$, and then rule out the possibility $\delta=0$.

\subsection*{Step 1: $\delta>0$ implies that $\mu$-a.e.~orbit is $T_{b}$-dense}

We assume that $\delta>0$ and show that $\mu$-a.e.~$x$ has dense
$b$-orbit. The proof uses Host's theorem, and does not require a
spectral assumption on the dynamics of $\mu$. 

\subsubsection*{Choosing typical $x\in[0,1]$ and its $b$-orbit closure $X$}

Let $x\in[0,1]$ be $\mu$-typical in the sense that $\dim\overline{O_{b}(x)}=\delta$
and that $x$ visits $\dense(\varepsilon)$ with asymptotic frequency
$\mu(\dense(\varepsilon))$ for all $\varepsilon>0$. Set $X=\overline{O_{b}(x)}$.

\subsubsection*{Choosing $y\in X$ with uniformly distributed $a$-orbit}

Fix $y\in X$ whose $T_{a}$-orbit is uniformly distributed. To see
that such $y$ exists, note that $\dim X=\delta>0$ and $X$ is $T_{b}$
invariant, so its topological entropy is positive. Apply the variational
principle to find a $T_{b}$-invariant and ergodic probability measure
$\mu\in P(X)$ with positive entropy. By Host's theorem, $\mu$-a.e.~$y$
uniformly distributes under $T_{a}$.

\subsubsection*{Choosing $\varepsilon,k$ and approximating $y$ by $y'=T_{b}^{n}x\in O_{b}(x)$}

Let $\varepsilon>0$ and choose $k$ such that $\dense_{b}(\varepsilon,k)$
has Lebesgue measure $>1-\varepsilon$. Such a $k$ exists by ergodicity
of Lebesgue measure under $T_{b}$.

Since $\dense_{b}(\varepsilon,k)$ is an open set of measure $>1-\varepsilon$
and $y$ uniformly distributes under $T_{a}$, for all large enough
$N$, all but an $\varepsilon$ fraction of the elements of $(T_{a}^{n}y)_{n=1}^{N}$
lie in $\dense_{b}(\varepsilon,k)$. Fix such an $N$.

Since $\dense_{b}(\varepsilon,k)$ is open, the last conclusion also
for any $y'$ sufficiently close to $y$.

Since $y\in X=\overline{O_{b}(x)}$, there exists $n=n(N)$ with $y'=T_{b}^{n}x$
close enough to $y$ for the above to hold. Thus, $\{T_{a}^{i}y'\}_{i=1}^{N}$
visits $\dense_{b}(\varepsilon,k)$ at least $(1-\varepsilon)N$ times.

\subsubsection*{Concluding that $T_{a}^{i}x\in\dense_{b}(\varepsilon)$ for most
$1\protect\leq i\protect\leq N$, and finishing the proof}

By the identity $T_{a}^{i}y'=T_{b}^{n}(T_{a}^{i}x)$, every time that
$T_{a}^{i}y'\in\dense_{b}(\varepsilon,k)$ we have $T_{a}^{i}x\in\dense_{b}(\varepsilon,k+i)\subseteq\dense_{b}(\varepsilon)$,
and this holds for all but an $\varepsilon$-fraction of $1\leq i\leq N$.

Since the last conclusion holds for all $N$ sufficiently large, by
typicality of $x$ we conclude that $\mu(\dense_{b}(\varepsilon))>1-\varepsilon$.
This holds for every $\varepsilon>0$, which implies that the $T_{b}$-orbit
of $\mu$-a.e.~point is dense.
\begin{rem*}
For this proof to work, it would suffice to find $y\in X$ that uniformly
distributes along a subsequence, i.e.~$(1/N_{k})\sum_{i=1}^{N_{k}}\delta_{T_{a}^{i}y}\rightarrow Lebesgue$
for some sequence $N_{k}\rightarrow\infty$. To find such $y$ we
do not the full strength of Host's theorem, and could instead invoke
the Rudolph-Johnson theorem \cite{JohnsonRudolph95}, which asserts
that $(1/N)\sum_{n=1}^{N}T_{a}^{i}\nu\rightarrow Lebesgue$. By passing
a subsequence $N_{k}\rightarrow\infty$ we obtain uniform distribution
for $\nu$-a.e.~$y$ along $N_{k}$. We will use this observation
later for the proof of Theorem \ref{thm:main-multi-d}.
\end{rem*}

\subsection*{Step 2: Ruling out $\delta=0$ }

Assume $\delta=0$. We derive a contradiction by showing that $\mu$-a.e.~$x$
has dense $T_{b}$-orbit.

To this end fix $\varepsilon>0$. It suffices for us to show that
$\mu(\dense(\varepsilon))>1-2\varepsilon$.

Let $\lambda,r_{0}>0$ be the result of applying Lemma \ref{lem:full-entropy-implies-density}
with the given $\varepsilon$. Thus, a measure with entropy $>1-\lambda$
as scale $r_{0}$ is supported on many points with $\varepsilon$-dense
$a$-orbit.

\subsubsection*{Choosing a scale $r_{1}$ at which $\dim(\overline{O_{b}(x)},r_{1})\approx0$ }

Since $\dim_{B}\overline{O_{b}(x)}=\dim\overline{O_{b}(x)}=\delta=0$
for $\mu$-a.e.~$x$ we have 
\[
\lim_{r\rightarrow0}\dim(\overline{O_{b}(x)},r)=0\qquad\text{for }\mu\text{-a.e.~}x.
\]
Therefore, for all sufficiently small $r$ we have
\begin{equation}
\mu(x\mid\dim(\overline{O_{b}(x)},r)<\frac{1}{5}\lambda)>1-\varepsilon.\label{eq:definitoiun-of-r}
\end{equation}
Choose $r_{1}$ such that (\ref{eq:definitoiun-of-r}) holds for all
$r<r_{1}$. We can assume that $r_{1}<r_{0}$, and that $\frac{C}{\log(1/r_{1})}<\lambda$
where $C$ is the implicit constant in (\ref{eq:covering-bounds-entropy}). 

\subsubsection*{Choosing $D_{1},\ldots,D_{L}\subseteq[0,1]$ on which $x\protect\mapsto\dim(\overline{O_{b}(x)},r)$
is nearly\,constant }

Let $\mathcal{H}=\mathcal{H}([0,1])$ denote the space of closed non-empty
subsets of $[0,1]$, endowed with the Hausdorff measure. This is a
compact space.

Fix $\sigma>0$ small enough that if $X,Y\in\mathcal{H}$ with $d_{\mathcal{H}}(X,Y)<\sigma$,
and if $X\in\mathcal{H}$ satisfies $\dim(X,r_{1})<\frac{1}{5}\lambda$,
then $\dim(Y,r_{1})<\frac{1}{4}\lambda$, and in particular, by Lemma
\ref{lem:small-dimension-propogates-for-a-sets}, if $Y$ is also
a $b$-set, then $\dim(Y,r)<\frac{1}{3}\lambda$ for all $r\leq r_{2}$,
where $r_{2}<r_{1}$ is any small enough scale.

Partition $[0,1]$ into measurable sets $D_{i}$ such that $\{\overline{O_{b}(x)}\}_{x\in D_{i}}$
has diameter $<\sigma$ in $\mathcal{H}$. By compactness of $\mathcal{H}$,
we can assume this partition is finite. Write 
\[
Y_{i}=\overline{\bigcup_{x\in D_{i}}O_{b}(x)}.
\]
Note that this is a $b$-set.

By choice of $\sigma$, if there exists $x\in D_{i}$ with $\dim(\overline{O_{b}(x)},r_{1})<\frac{1}{8}\lambda$,
then $\dim(Y_{i},r)<\frac{1}{3}\lambda$ for all $r<r_{2}$, so by
(\ref{eq:definitoiun-of-r}),
\begin{equation}
\mu(\cup\{D_{i}\mid\dim(Y_{i},r)<\frac{1}{3}\lambda\}\text{ for all \ensuremath{r\leq r_{2}}})>1-\varepsilon.\label{eq:Yi-has-small-r-dim}
\end{equation}
Renumber the positive-measure sets $D_{i}$ in the event above as
$D_{1},\ldots,D_{L}$ for some $L$. Thus $\mu(D_{i})>0$ and 
\begin{equation}
\mu(\bigcup_{i=1}^{L}D_{i})>1-\varepsilon.\label{eq:measure-of-union-of-Ai}
\end{equation}
Let 
\[
C=\max_{1\leq i\leq L}\frac{2}{\mu(D_{i})}
\]

\subsubsection*{A sequence $t_{n}$ driven by an irrational rotation and its entropy
at scale $r$}

Define
\[
\alpha=\frac{\log a}{\log b}
\]
Write $\{s\}$ for the fractional part of $s$ and set $t_{n}=b^{\{\alpha n\}}$.
By multiplicative independence of $a,b$ we have $\alpha\notin\mathbb{Q}$,
so $\{\alpha n\}$ equidistributes for Lebesgue measure $\tau$ on
$[0,1]$, hence $t_{n}$ equidistributes to the push-forward $\theta$
of $\tau$ by $t\mapsto b^{t}$. Clearly $\theta$ is absolutely continuous
with bounded Radon-Nikodym derivative.

Apply Lemma \ref{lem:irrational-rotation-along-positive-density-set}
with $\rho=\varepsilon$, and with $(t_{n})$, $C$ as above. Let
$r_{3}<r_{2}$ be such that the conclusion of the lemma holds for
all $r<r_{3}$ and for $N$ correspondingly large. 

Fix such an $r<r_{3}$. 

\subsubsection*{Choosing a set $E\subseteq[0,1]$ of ``good'' generic points}

Choose a set $E\subseteq[0,1]$ of positive measure on which convergence
in the ergodic theorem holds uniformly for the indicator functions
of $D_{1},\ldots,D_{L}$ and $\dense_{b}(\varepsilon)$. Such a set
can be produced by repeated application of Egorov's theorem.

\subsubsection*{Choosing $x,y\in E$ with a very small distance $\approx b^{-N}$
between them}

Choose a $\mu$-typical $x\in E$. We can assume that $x$ is not
isolated in $E$, because $\mu(E)>0$ and $\mu$ is non-atomic.

Fix $y\in E\setminus\{x\}$ very close to $x$ and let 
\[
N=[\log_{b}\frac{1}{2|x-y|}].
\]
so that $|T_{b}^{n}x-T_{b}^{n}y|\leq1/2$ for all $0\leq n\leq N$
and $|T_{b}^{N}x-T_{b}^{N}y|\geq\frac{1}{2b}$.

Note that $N$ depends on $y$ and can be made arbitrarily large by
making $|y-x|$ small.

\subsubsection*{Expressing $y_{n}-x_{n}$ in terms of $t_{n}$}

For $0\leq n\leq[N/\alpha]$, consider the points
\begin{align*}
x_{n} & =T_{b}^{N-[\alpha n]}T_{a}^{n}x\\
y_{n} & =T_{b}^{N-[\alpha n]}T_{a}^{n}y.
\end{align*}
When we identify $y-x$ (and $y_{n}-x_{n}$) with a real number in
$[0,1]$, we get
\begin{align}
|y_{n}-x_{n}| & =b^{N}|y-x|\cdot b^{\alpha n-[\alpha n]}\nonumber \\
 & =c_{x,y}\cdot t_{n}\label{eq:yn-xn-difference}
\end{align}
where $t_{n}=b^{\{\alpha n\}}$, as above and $c_{x,y}\in[\frac{1}{2b},\frac{1}{2}]$.
The identity (\ref{eq:yn-xn-difference}) is true because the distance
between $x_{n},y_{n}$ never exceeds $1/2$ for $0\leq n\leq[N/\alpha]$,
hence there is no need to reduce modulo one. 

\subsubsection*{The entropy of $y_{n}$ at times that $T_{a}^{n}x\in D_{i}$ }

Fix $1\leq i\leq L$ and let 
\[
I_{i}=\{1\leq n\leq[N/\alpha]\mid T_{a}^{n}x\in D_{i}\}.
\]
By our choice of $E$ we have 
\begin{equation}
\frac{1}{[N/\alpha]}|I_{i}|=\mu(D_{i})+o(1)\label{eq:Ii-size}
\end{equation}
as $N\rightarrow\infty$. For large $N$ this is $>1/C$.

Fix a large $N$ as above and let 
\[
\theta_{i}=\frac{1}{|I_{i}|}\sum_{n\in I_{i}}\delta_{t_{n}}
\]
and
\[
\nu_{i}=\frac{1}{|I_{i}|}\sum_{n\in I_{i}}\delta_{y_{n}-x_{n}}=f\theta_{i}
\]
where $f(z)=c_{x,y}z$. Observe that all $t_{n}$ are distinct, so
by (\ref{eq:Ii-size}), writing $\theta'_{N}=\frac{1}{[N/\alpha]}\sum_{1\leq n\leq[N/\alpha]}\delta_{t_{n}}$,
we have $d\theta_{i}/d\theta'_{N}<C$ if $N$ is large enough (i.e.~when
$y$ is close enough to $x$). Thus, since $\theta'_{N}\rightarrow\theta$,
by Lemma \ref{lem:irrational-rotation-along-positive-density-set},
\[
\frac{1}{\log(1/r)}H(\theta_{i},r)>1-\frac{1}{2}\lambda.
\]
and since $f$ is bi-Lipschitz with constant independent of $r,N$,
if $r$ is small then 
\[
\frac{1}{\log(1/r)}H(\nu_{i},r)>1-\frac{1}{2}\lambda.
\]

On the other hand, we have $x_{n}\in Y_{i}$ for $n\in I_{i}$ (because
$x_{n}=T_{b}^{N-[\alpha n]}T_{a}^{n}x\in O_{b}(T_{a}^{n}x)\subseteq Y_{i}$
when $T_{a}^{n}x\in D_{i}$ and the last condition is equivalent to
$n\in I_{i}$). Thus, the uniform distribution $\nu'_{i}$ on the
sequence $(x_{n})_{n\in I_{i}}$ is supported on $Y_{i}$. Since $\dim(Y_{i},r)<\frac{1}{3}\lambda$
(see (\ref{eq:Yi-has-small-r-dim}) and the subsequent paragraph),
and using (\ref{eq:covering-bounds-entropy}), we have
\[
\frac{1}{\log(1/r)}H(\nu'_{i},r)\leq\dim(Y_{i},r)+O(\frac{1}{\log(1/r)})<\frac{1}{2}\lambda
\]
by our choice of $r$. 

It follows from Lemma \ref{lem:irrational-rotation-along-positive-density-set}
that the uniform distribution $\nu''_{i}$ on the sequence $(y_{n})_{n\in I_{i}}$
satisfies 
\begin{equation}
\frac{1}{\log(1/r)}H(\nu''_{i},r)>\frac{1}{\log(1/r)}H(\nu_{i},r)-\frac{1}{\log(1/r)}H(\nu'_{i},r)>1-\lambda.\label{eq:nearly-maximal-entropy}
\end{equation}
This inequality is explained as follows: Let $\tau_{i}$ denote the
uniform distribution on the sequence pairs $((x_{n},y_{n}))_{n\in I_{i}}$.
Then $\nu'_{i},\nu''_{i}$ are the marginal measures on $\tau_{i}$,
hence $H(\tau_{i},r)\leq H(\nu'_{i},r)+H(\nu'',r)$, and $\nu_{i}$
is the image of $\tau_{i}$ under the Lipschitz map $(x,y)\mapsto y-x$,
so $H(\nu_{i},r)\leq H(\tau_{i},r)+O(1)$. Combining the last two
inequalities and dividing by $\log(1/r)$, and assuming $r$ is small,
we get (\ref{eq:nearly-maximal-entropy}).

\subsubsection*{Completion of the proof}

By Lemma \ref{lem:full-entropy-implies-density}, our choice of $\lambda,r_{0}$,
and (\ref{eq:nearly-maximal-entropy}), for all but $1-\varepsilon$
of the elements $n\in I_{i}$ we have $y_{n}\in\dense_{b}(\varepsilon)$. 

Since $y_{n}\in O_{b}(T_{a}^{n}y)$ we have $O_{b}(T_{a}^{n}y)\supseteq O_{b}(y_{n})$,
and hence $T_{a}^{n}y\in\dense_{b}(\varepsilon)$ for $1-\varepsilon$
of $n\in I_{i}$.

For large $N$, the sequence $(T_{a}^{n}y)_{n=1}^{N}$ visits $D_{i}$
with frequency arbitrarily close to $\mu(D_{i})$ (since by definition
of $E$, convergence is uniform on $E$ and $y\in E$).

Since $\sum_{i=1}^{L}\mu(D_{i})>1-\varepsilon$, we conclude that
$T_{a}^{n}y\in\dense_{b}(\varepsilon)$ for $1-\varepsilon$ of times
$1\leq n\leq N$.

Finally, again using the fact that convergence of the ergodic averages
of $1_{\dense_{b}(\varepsilon)}$ is uniform on $E$ and that $y\in E$,
we conclude that
\[
\mu(\dense_{b}(\varepsilon))>1-2\varepsilon.
\]
This is what we were out to show.

\section{\label{sec:Proof-of-multi-d}Proof of Theorem \ref{thm:main-multi-d}}

In this section we prove Theorem \ref{thm:main-multi-d}. Fix non
singular, commuting matrices $A,B\in M_{d}(\mathbb{Z})$ that act
totally irreducibly on $\mathbb{T}^{d}$, along with a $T_{A}$-invariant
and ergodic non-atomic measure $\mu\in\mathcal{P}(\mathbb{T}^{d})$. 

We give the proof under the assumption (1) of the theorem, i.e.~that
$B$ is expanding. The proof in the hyperbolic case is the similar
but using two-sided orbits.

The structure of the proof is similar to the one-dimensional case,
but the details are more involved. One difference is that, due to
the non-conformal nature of the dynamics, for part of the argument
we work directly with topological entropy, rather than dimension.
Some background and preliminaries are given in Section (\ref{subsec:Coverings-and-topological-entropy})
and (\ref{subsec:Substantial-measures-lead-to-large-orbits}). 

Another difference is that for nearby points $x,y\in\supp\mu$, the
orbit $T_{B}^{N}(x-y)=B^{N}(y-x)\bmod1$ can behave differently depending
on the mixture of eigendirections represented in $y-x$, and the relatively
simple expression for $a^{n}b^{-[\alpha n]}$ appearing in in (\ref{eq:yn-xn-difference})
becomes a matrix expression $A^{n}B^{[\alpha n]}$ for an $\alpha$
that depends on the point we apply the matrices to. We discuss the
behavior of such products in Section (\ref{subsec:Largeness-of-Euclidean-orbits}).

\subsection{\label{subsec:Algebraic-consequences-of-assumptions}Algebraic preliminaries }

We recall some standard consequences of our assumptions on $A,B$,
and introduce notation some that will be used later.
\begin{lem}
Commutation and total irreducibility of $A,B$ imply that they are
jointly diagonalizable over $\mathbb{C}$. 
\end{lem}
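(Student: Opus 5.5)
The plan is to show first that $A$ alone is diagonalizable over $\mathbb{C}$ with an eigenbasis that also diagonalizes $B$. Total irreducibility will be used in the following form: the only $A,B$-invariant rational subspaces of $\mathbb{Q}^{d}$ are $0$ and $\mathbb{Q}^{d}$. Indeed, suppose $W\leq\mathbb{Q}^{d}$ is a proper nonzero rational subspace invariant under $A$ and $B$. Then $G=(W\otimes\mathbb{R}+\mathbb{Z}^{d})/\mathbb{Z}^{d}$ is an infinite, closed, proper subtorus of $\mathbb{T}^{d}$, and $T_{A}G=T_{B}G=G$ because $A,B$ are nonsingular on $W$. In particular the family $\{T_{A}^{m}T_{B}^{n}G\}$ consists of the single set $G$, which contradicts total irreducibility.

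Next, consider the minimal polynomial $m_{A}\in\mathbb{Q}[t]$ of $A$ and factor it as $m_{A}=\prod p_{j}^{e_{j}}$ into irreducibles over $\mathbb{Q}$. Each kernel $\ker p_{j}(A)^{e_{j}}$ is a rational subspace. Since $B$ commutes with $A$, each such kernel is also $B$-invariant. By the previous paragraph, there is therefore only one factor, so $m_{A}=p^{e}$. The same argument applied to $\ker p(A)$ shows that $\ker p(A)=\mathbb{Q}^{d}$, so $p(A)=0$ and $m_{A}=p$ is irreducible over $\mathbb{Q}$. An irreducible polynomial over $\mathbb{Q}$ is separable, so $m_{A}$ has distinct complex roots and $A$ is diagonalizable over $\mathbb{C}$. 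The same reasoning, with the roles of $A$ and $B$ exchanged, shows that $B$ is diagonalizable over $\mathbb{C}$.

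Finally, two commuting diagonalizable matrices are simultaneously diagonalizable. Each eigenspace $E_{\lambda}$ of $A$ is $B$-invariant, and the restriction of $B$ to $E_{\lambda}$ is diagonalizable because it is annihilated by the squarefree polynomial $m_{B}$. Diagonalizing $B$ on each $E_{\lambda}$ therefore produces a common eigenbasis. The only step that needs some care is the translation from invariant rational subspaces to invariant subtori in the first paragraph, in particular checking that $G$ is closed and proper. This holds because $W$ is rational, so $W\otimes\mathbb{R}$ meets $\mathbb{Z}^{d}$ in a lattice of full rank in $W\otimes\mathbb{R}$, and $G$ is a torus of dimension $\dim W$ with $0<\dim W<d$.
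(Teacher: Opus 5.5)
Your proof is correct, but it is organized differently from the paper's. The paper reduces the lemma to showing that $A$ alone has distinct eigenvalues, so that $B$ must preserve the eigenlines of $A$. You instead show that the minimal polynomials of $A$ and of $B$ are each irreducible over $\mathbb{Q}$, hence separable, and then use simultaneous diagonalization of commuting diagonalizable matrices.

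Your organization is the one that actually works. Distinct eigenvalues for $A$ does not follow from the hypotheses: for example, $A=2I$ and $B=\bigl(\begin{smallmatrix}2&1\\1&1\end{smallmatrix}\bigr)$ commute, act totally irreducibly, and their joint action is higher rank. Accordingly, the paper's argument breaks where it treats $\ker q(A)$ as a proper subspace, since $q(A)=0$ for the minimal polynomial $q$.

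What the paper's reduction was meant to provide is one-dimensional joint eigenspaces. These are implicitly used later, when $A,B$ are said to act on each $U^{i}$ by scalars. You can obtain them by pushing your idea one step further. For $0\neq R\in\mathbb{Q}[A,B]$, $\ker R$ is a rational $A,B$-invariant subspace other than $\mathbb{Q}^{d}$, hence $0$, and $R^{-1}$ is a polynomial in $R$. So $K=\mathbb{Q}[A,B]$ is a field, and $\mathbb{Q}^{d}$ is a simple, hence one-dimensional, $K$-module. Thus $[K:\mathbb{Q}]=d$, and the $d$ complex embeddings of $K$ yield $d$ distinct joint eigencharacters. This also makes your primary-decomposition step unnecessary, since $\mathbb{Q}[A]$ is then a subfield of $K$.
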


\begin{proof}
It suffices to show that $A$ has distinct eigenvalues. If it did
not then its minimal polynomial $q$ would properly divide its characteristic
polynomial $p$. Then $\ker q(A)$ is a rational subspace which is
both $A$ and $B$ invariant, since $AB=BA$. This contradicts total
irreducibility.
\end{proof}
Let $\mathbb{R}=\oplus U^{i}$ denote the splitting of $\mathbb{R}^{d}$
into $B$-irreducible (equivalently, $A$-irreducible) subspaces and
write $\pi^{i}:\mathbb{R}^{d}\rightarrow U^{i}$ for the associated
projection. We identify the $\mathbb{R}$-linear space $U^{i}$ with
the field $\mathbb{F}^{i}=\mathbb{R}$ if the associated eigenvalue
is real, or $\mathbb{F}^{i}=\mathbb{C}$ if it is complex, so $A,B$
act on each $U^{i}\cong\mathbb{F}^{i}$ by scalar multiplication by
a scalar in $\mathbb{F}^{i}$. We refer to the elements of $U^{i}$
as eigenvectors and for an eigenvector $u$ write $a_{i},b_{i}\in\mathbb{F}^{i}$
for the eigenvalues of $A,B$ on $U^{i}$, respectively. Fixing an
element $0\neq u^{i}\in U^{i}$ for each $i$, we can write every
$u\in\mathbb{R}^{d}$ uniquely as $u=\sum c_{i}u^{i}$ with $c_{i}\in\mathbb{F}^{i}$. 
\begin{lem}
Total irreducibility and absence of rank one factors of the action
implies that $a_{i},b_{i}$ are multiplicatively independent for each
of the irreducible spaces $U^{i}$. 
\end{lem}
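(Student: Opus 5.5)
We argue by contradiction: a multiplicative relation between $a_i$ and $b_i$ on some $U^i$ should produce either a virtually cyclic joint action (excluded by the higher rank hypothesis) or an infinite proper closed subgroup with finite joint orbit (excluded by total irreducibility). Suppose that for some $i$ there are integers $(m,n)\neq(0,0)$ with $a_i^m=b_i^n$. Since $A,B$ are non-singular integer matrices, we may pass to the rational matrices $A^m,B^n$ (or $A^mB^{-n}$ as an element of $GL_d(\mathbb{Q})$) and consider $C=A^mB^{-n}$, which commutes with $A$ and $B$ and acts on $U^i$ as the scalar $a_i^mb_i^{-n}=1$.

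The key step is Galois conjugation. The eigenvalues of $A$ (resp.~$B$) on the various $U^j$ are roots of the characteristic polynomial. By total irreducibility this polynomial is irreducible over $\mathbb{Q}$: a nontrivial factor would give a proper nonzero rational $A$-invariant (hence, by commutation, $B$-invariant) subspace, whose image in $\mathbb{T}^d$ is an infinite proper closed subtorus fixed by all $T_A^mT_B^n$. Since $A$ has distinct eigenvalues (previous lemma) and $B$ commutes with $A$, we have $B=p(A)$ for a rational polynomial $p$. Thus $b_j=p(a_j)$, and the pairs $(a_j,b_j)$ are all Galois conjugates of $(a_i,b_i)$ under $\mathbb{Q}(a_i)$. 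Applying the embedding sending $a_i\mapsto a_j$ to the relation $a_i^m=p(a_i)^n$ gives $a_j^m=b_j^n$ for every $j$. Hence $A^m=B^n$ as matrices, since they agree on a diagonalizing basis.

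It remains to show that $A^m=B^n$ forces a rank one action. If, say, $m\neq0$ and $n\neq0$, the semigroup generated by $A,B$ is virtually cyclic: every $A^kB^l$ is, up to the finite set of $k\bmod m$, a power of a single element relative to $B$. Precisely, the group $\{(k,l):A^kB^l=I\}$ contains $(m,-n)$, so the image of $\mathbb{Z}^2$ has rank at most one, contradicting the higher rank assumption. If $n=0$ then $a_j^m=1$ for all $j$, so $A^m=I$. The action is then again virtually cyclic, or degenerate, since non-singular $A$ of finite order yields a rank $\le1$ joint action; this also contradicts higher rank. The case $m=0$ is symmetric.

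The main obstacle is the Galois step, namely making precise that total irreducibility gives irreducibility of the characteristic polynomial over $\mathbb{Q}$ and that $B$ is a polynomial in $A$. I expect this to be routine given the previous lemma. The only subtlety is interpreting ``higher rank'' correctly as the absence of a nontrivial relation $A^m=B^n$; the claim then reduces to the observation above.
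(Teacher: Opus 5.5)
Your argument is correct, but it takes a longer route than the paper. The paper works directly with the matrix you already wrote down, $C=A^{m}B^{-n}$:
\begin{itemize}
\item since $C$ has rational entries, its $1$-eigenspace $\ker(C-I)$ is a rational subspace;
\item it is invariant under $A$ and $B$ because $C$ commutes with both;
\item it is nonzero because it contains $U^{i}$.
\end{itemize}
Its image in $\mathbb{T}^{d}$ is therefore a closed subtorus mapped to itself by every $T_{A}^{k}T_{B}^{l}$. Total irreducibility then forces $\ker(C-I)=\mathbb{R}^{d}$, i.e.\ $A^{m}=B^{n}$, so the action is virtually cyclic.

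This sidesteps the step you flagged as the main obstacle. It needs neither irreducibility of the characteristic polynomial, nor $B\in\mathbb{Q}[A]$, nor Galois embeddings. The same trick with $\ker P(A,B)$ would in fact handle any polynomial relation $P(a_i,b_i)=0$ over $\mathbb{Q}$.

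Your Galois route is nonetheless sound at each step:
\begin{itemize}
\item irreducibility of $\chi_A$ via $\ker q(A)$, using distinct eigenvalues for properness;
\item the centralizer of a matrix with distinct eigenvalues being $\mathbb{Q}[A]$;
\item transport of the relation by the embedding $a_i\mapsto a_j$.
\end{itemize}
It also yields extra structural information, namely that the pairs $(a_j,b_j)$ form a single Galois orbit, though that is not needed for this lemma.

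Finally, your separate cases $n=0$ and $m=0$ are redundant. The nonzero element $(m,-n)$ in the kernel of $(k,l)\mapsto A^{k}B^{l}$ already gives rank at most one in every case.
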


\begin{proof}
If for some $i$ and $m,n$ we have $a_{i}^{m}=b_{i}^{n}$, then $A^{m}B^{-n}|_{U^{i}}$
acts as the identity, so $A^{m}B^{-n}$ has $1$ for an eigenvalue.
The corresponding eigenspace is invariant under $A,B$ (since they
commute with $A^{m}B^{-n}$), contradicting total irreducibility unless
this is the entire space, in which case $A,B$ would be virtually
cyclic.
\end{proof}

\subsection{\label{subsec:Coverings-and-topological-entropy}Review of topological
entropy }

Let $X$ be a compact metric space and $T:X\rightarrow X$ a continuous
map. For an open cover $\mathcal{V}$ of $X$, write 
\[
\cov(X,\mathcal{V})=\,\text{the minimal size of a collection \ensuremath{\mathcal{V}}'\ensuremath{\subseteq}\ensuremath{\mathcal{V}} that covers \ensuremath{X}}
\]
If $\mathcal{W}$ is another open cover then $\mathcal{V}\lor\mathcal{W}=\{V\cap W\mid V\in\mathcal{V},W\in\mathcal{W}\}$.
Set 
\[
\mathcal{V}_{N}=\bigvee_{n=0}^{N-1}T^{-n}\mathcal{V}.
\]
The topological entropy of $(X,T)$ is given by
\[
h_{top}(X,T)=\sup_{\mathcal{V}}\inf_{N\in\mathbb{N}}\frac{1}{N}\log\cov(X,\mathcal{V}_{N}).
\]
where $\mathcal{V}$ runs over all open covers. By subadditivity of
the sequence, one could equivalently replace the infimum with the
limit as $N\rightarrow\infty$.

The map $T$ is said to be expansive with constant $c>0$, if $\sup_{n\geq0}d_{X}(T^{n}x,T^{n}y)>c$
for every distinct $x,y\in X$. Note that this property passes to
subsystems. For such a system, if the elements of $\mathcal{V}$ have
diameter $<c$, then $\mathcal{V}$ realizes the supremum in the definition
of entropy:

\begin{equation}
h_{top}(X,T_{B})=\inf_{N\in\mathbb{N}}\frac{1}{N}\log\cov(X,\mathcal{U}_{N}).\label{eq:top-entropy}
\end{equation}

\subsection{Topological entropy and dimension for toral endomorphisms}

Recall the notation of Section \ref{subsec:Algebraic-consequences-of-assumptions}.

Each $u\in\mathbb{R}^{d}$ can be written as $u=\sum c_{i}u^{i}$.
Define a norm on $\mathbb{R}^{d}$ by $\left\Vert u\right\Vert =\max|c_{i}|$
(here $|c_{i}|$ is the absolute value in the field $\mathbb{F}^{i}\cong U^{i}$).
This induces a metric on the torus $\mathbb{T}^{d}=\mathbb{R}^{d}/\mathbb{Z}^{d}$
which we shall use from now on.

Let $b_{\min}\leq b_{\max}$ denote the minimal and maximal modulus
of eigenvalues of $B$. With respect to the norm defined above we
have $\left\Vert B\right\Vert =b_{\max}$ and $\left\Vert B^{-1}\right\Vert =1/b_{\min}$.
Set
\begin{align*}
\rho & =\frac{1}{2b_{\min}}=\frac{\left\Vert B^{-1}\right\Vert }{2}\\
c & =\frac{1}{2b_{\max}}=\frac{1}{2\left\Vert B\right\Vert }.
\end{align*}
so $c\leq\rho<1$. 
\begin{lem}
$T_{B}$ is expansive with constant $c$.
\end{lem}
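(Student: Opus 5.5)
The plan is to show that if $x\neq y$ in $\mathbb{T}^{d}$, then some iterate $n\geq0$ satisfies $d(T_{B}^{n}x,T_{B}^{n}y)>c$. The argument rests on two facts about the norm $\left\Vert u\right\Vert =\max|c_{i}|$ from the eigen-decomposition. First, $\left\Vert Bu\right\Vert \geq b_{\min}\left\Vert u\right\Vert $. Second, $\left\Vert Bu\right\Vert \leq b_{\max}\left\Vert u\right\Vert $. Both hold because $B$ acts on each $U^{i}\cong\mathbb{F}^{i}$ by multiplication by $b_{i}$. Expansiveness in the stated case uses that $B$ is expanding, so $b_{\min}>1$. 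In the hyperbolic two-sided case one would run the same argument on the unstable and stable components separately, using forward and backward iterates respectively.

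Write $d(x,y)=\min\{\left\Vert u\right\Vert : u\in\mathbb{R}^{d},\ u\equiv y-x \bmod \mathbb{Z}^{d}\}$. Fix distinct $x,y$ and let $u_{0}$ be a minimal lift, so $\left\Vert u_{0}\right\Vert =d(x,y)>0$. Suppose, for contradiction, that $d(T_{B}^{n}x,T_{B}^{n}y)\leq c$ for all $n\geq0$. Inductively choose minimal lifts $u_{n}$ of $T_{B}^{n}y-T_{B}^{n}x$, so that $\left\Vert u_{n}\right\Vert \leq c$.

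The key step is to show that $u_{n+1}=Bu_{n}$ exactly, with no reduction modulo $\mathbb{Z}^{d}$ occurring. We know that $Bu_{n}$ is a lift of $T_{B}^{n+1}y-T_{B}^{n+1}x$. Its norm is $\left\Vert Bu_{n}\right\Vert \leq b_{\max}c=\frac{1}{2}$. Any other lift has the form $Bu_{n}+k$ with $0\neq k\in\mathbb{Z}^{d}$. We need such lifts to have norm greater than $\frac{1}{2}$, which requires $\left\Vert k\right\Vert \geq1$ for every nonzero integer vector $k$, and this is the main obstacle. The norm is not Euclidean, so this bound on integer vectors must be checked, not assumed. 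If it fails, I would rescale the eigenvectors $u^{i}$ so that $\min_{k\neq0}\left\Vert k\right\Vert \geq1$. This is possible because the lattice $\mathbb{Z}^{d}$ is discrete, and the paper allows a free choice of the $u^{i}$. Equivalently, I would define $d$ on the torus as the quotient metric and let the constants absorb the normalization. With this in place, the triangle inequality gives $\left\Vert Bu_{n}+k\right\Vert \geq1-\frac{1}{2}=\frac{1}{2}$. Hence $Bu_{n}$ is a minimal lift, and we may take $u_{n+1}=Bu_{n}$. Ties at exactly $\frac{1}{2}$ are harmless, because we only need the minimal lift to be $Bu_{n}$ up to norm.

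Iterating gives $u_{n}=B^{n}u_{0}$, so $\left\Vert u_{n}\right\Vert \geq b_{\min}^{n}\left\Vert u_{0}\right\Vert \to\infty$. This contradicts $\left\Vert u_{n}\right\Vert \leq c$. Therefore some $n$ has $d(T_{B}^{n}x,T_{B}^{n}y)>c$, which is expansiveness with constant $c$.
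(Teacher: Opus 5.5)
Your proof is correct and follows the paper's argument: while the distance stays at most $c$, the minimal lift is multiplied exactly by $B$, with no reduction mod $\mathbb{Z}^{d}$ since $b_{\max}c=\frac{1}{2}$, so it grows by a factor of at least $b_{\min}>1$ per step and must eventually exceed $c$. You also make explicit that this needs $\|k\|\geq1$ for all $0\neq k\in\mathbb{Z}^{d}$ (arranged by rescaling the $u^{i}$); that condition is genuinely required, and the paper leaves it implicit in ``by our choice of metric''.
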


\begin{proof}
By our choice of metric, if $x,y\in\mathbb{T}^{d}$ and $d(x,y)<c$,
then 
\[
b_{\min}d(x,y)\leq d(T_{B}x,T_{B}y)\leq b_{\max}d(x,y)\leq\frac{1}{2}
\]
Thus, applying $T_{B}$ repeatedly as long as $d(T_{B}^{n}x,T_{B}^{n}y)<c$,
the distance must eventually surpass $c$.
\end{proof}

Once and for all, fix a partition $\mathcal{U}$ of $\mathbb{T}^{d}$
into balls of diameter $<c$ (hence $<\rho$), and in particular,
$<c$. Write $\mathcal{U}_{K}=\bigvee_{k=0}^{K-1}T_{B}^{-k}\mathcal{U}$.
Suppressing $\mathcal{U}$ in the notation, define for any $X\subseteq\mathbb{T}^{d}$
\begin{equation}
H_{top}(X,K)=\frac{\log\cov(X,\mathcal{U}_{K})}{K}.\label{eq:def-of-HK}
\end{equation}
By expansivity and choice of $\mathcal{U}$, we have $H_{top}(X,K)\rightarrow h_{top}(X,T_{B})$
for any subsystem $X\subseteq\mathbb{T}^{d}$.

The relations between $H_{top}(X,K)$ and $\dim(X,r)$ is clarified
in the following lemmas.
\begin{lem}
Let $\mathcal{U}$ be a cover of $\mathbb{T}^{d}$ by balls of diameter
$<c$, as above. Then 
\begin{equation}
\diam U\leq\rho^{N}\qquad\qquad\text{for all U\ensuremath{\in\mathcal{U}_{N}}}.\label{eq:UN-diameter-bound}
\end{equation}
\end{lem}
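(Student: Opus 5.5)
The plan is to derive \eqref{eq:UN-diameter-bound} from the same local-expansion estimate used in the proof that $T_{B}$ is expansive. Everything reduces to one comparison of constants, in the last step, and I expect that step to be the obstacle.

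\emph{Writing out $U$.} Fix $U\in\mathcal{U}_{N}$ and write $U=U_{0}\cap T_{B}^{-1}U_{1}\cap\dots\cap T_{B}^{-(N-1)}U_{N-1}$ with $U_{k}\in\mathcal{U}$. Take $x,y\in U$. Then $T_{B}^{k}x,T_{B}^{k}y\in U_{k}$ for $0\le k\le N-1$, so $d(T_{B}^{k}x,T_{B}^{k}y)<c$ for all these $k$.

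\emph{Lifting and expanding.} Lift $x,y$ to $\tilde x,\tilde y\in\mathbb{R}^{d}$ with $\|\tilde y-\tilde x\|=d(x,y)<c$, and set $v_{k}=B^{k}(\tilde y-\tilde x)$. I will show by induction on $k\le N-1$ that $\|v_{k}\|=d(T_{B}^{k}x,T_{B}^{k}y)$, i.e.\ that no reduction modulo $\mathbb{Z}^{d}$ occurs. For the inductive step, $\|v_{k}\|<c=\frac{1}{2\|B\|}$ gives $\|v_{k+1}\|\le\|B\|\,\|v_{k}\|<\frac12$. So $v_{k+1}$ is the shortest representative of $T_{B}^{k+1}y-T_{B}^{k+1}x$ (strictly, this needs the chosen norm to be comparable with the torus metric at scale $\frac12$, which I would arrange or absorb into the choice of $\mathcal{U}$). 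Since $B$ acts on each $U^{i}$ by a scalar of modulus $\ge b_{\min}$, and the norm is the max over the coordinates $c_{i}$, we get $\|v_{k+1}\|\ge b_{\min}\|v_{k}\|$.

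\emph{Iterating.} Chaining these bounds gives
\[
d(x,y)\le b_{\min}^{-(N-1)}\,d(T_{B}^{N-1}x,T_{B}^{N-1}y)<c\,b_{\min}^{-(N-1)}=\frac{b_{\min}}{2b_{\max}}\,b_{\min}^{-N}.
\]
Taking the supremum over $x,y\in U$ bounds $\diam U$.

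\emph{Comparison with $\rho^{N}$ (main obstacle).} The remaining step is to compare $c\,b_{\min}^{-(N-1)}$ with $\rho^{N}=(2b_{\min})^{-N}$. For $N=1$ it holds, since $\diam U<c\le\rho$. For larger $N$ the estimate above loses a factor $2^{N}$ relative to $\rho^{N}$. The first thing I would try is to recover a factor $\frac12$ at each step by refining the cover. In the induction one may replace $\mathcal{U}$ by a cover whose $T_{B}$-preimages split each element into pieces of at most half the size, i.e.\ apply the argument to the partition generated by $\mathcal{U}$ at scale $\rho$ rather than $c$. If that fails, I would check that every later use of \eqref{eq:UN-diameter-bound} only needs exponential decay $\diam U\le C\theta^{N}$ with $\theta<1$. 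In that case the lemma would be proved with $\rho$ read as any fixed constant $\ge b_{\min}^{-1}$, and $\rho<1$ is exactly what expansiveness supplies.
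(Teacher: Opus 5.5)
Your lifting argument is correct and gives $\diam U\le c\,b_{\min}^{-(N-1)}$ for $U\in\mathcal{U}_{N}$. It is the same expansion mechanism as the paper's induction (the components of $T_{B}^{-1}V$ are well separated, so an element of $\mathcal{U}$ meets at most one of them), carried out pointwise. The comparison you flag as the main obstacle, however, cannot be completed by any argument. With $\rho=\frac{1}{2b_{\min}}$, the bound (\ref{eq:UN-diameter-bound}) is false for all large $N$, for every cover $\mathcal{U}$. To see this, note that some $U_{*}\in\mathcal{U}$ contains the $\epsilon$-ball about the fixed point $0$ for some $\epsilon>0$. Choose $i_{0}$ with $|b_{i_{0}}|=b_{\min}$. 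Then $\|B^{k}(tu^{i_{0}})\|=b_{\min}^{k}|t|<\epsilon$ for all $k<N$ whenever $|t|<\epsilon b_{\min}^{-(N-1)}$. So $\bigcap_{k<N}T_{B}^{-k}U_{*}\in\mathcal{U}_{N}$ has diameter at least $2\epsilon b_{\min}^{-(N-1)}$, and $2\epsilon b_{\min}^{-(N-1)}/\rho^{N}=2^{N+1}\epsilon b_{\min}\to\infty$. Already for $d=1$ and $B=b$ (where $\rho=c=\frac{1}{2b}$) this happens at $N=2$: if $(-s,s)\in\mathcal{U}$ with $\frac{1}{8b}<s<\frac{1}{4b}$, then the element $(-s,s)\cap T_{b}^{-1}(-s,s)=(-\frac{s}{b},\frac{s}{b})$ of $\mathcal{U}_{2}$ has diameter $\frac{2s}{b}>\frac{1}{4b^{2}}=\rho^{2}$. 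The paper's proof gains the missing factor $\frac{1}{2}$ per step in its assertion that each component of $T_{B}^{-1}V$ has diameter $\rho\diam V$. The correct factor is $\|B^{-1}\|=1/b_{\min}=2\rho$, exactly as your computation shows.

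Consequently your first remedy cannot work. The cover is part of the hypothesis, and by the example above no choice of it recovers the factor $2^{-N}$. Your second remedy is the right repair. What you have actually proved is $\diam U\le c\,b_{\min}^{-(N-1)}\le\frac{1}{2}b_{\min}^{-N}$. This is the lemma with $\rho$ replaced by $\|B^{-1}\|=1/b_{\min}$, which is $<1$ because $B$ is expanding. The later uses, namely (\ref{eq:HK-bounds-c-delta-to-the-K}), Lemma~\ref{lem:cr-bound-in-terms-of-HK} and Lemma~\ref{lem:measure-on-Omega-K-h-have-small-entropy}, need only this exponential decay. The resulting $B$-dependent constants are absorbed into $O_{\rho,d}(h)$ and into the choice $h\ll\varepsilon$; for example, the first of these uses becomes $H_{top}(X,K)\ge\log(b_{\min})\cdot\dim(X,b_{\min}^{-K})$. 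Your caveat about the metric is also legitimate, and it is equally implicit in the paper's expansivity lemma. It is handled by scaling the $u^{i}$ down until every nonzero vector of $\mathbb{Z}^{d}$ has norm $\ge1$; then a lift of norm $<\frac{1}{2}$ is the unique shortest one.
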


\begin{proof}
It suffices to prove that every $W\in\mathcal{U}_{N}$ is contained
in an ellipsoid of diameter $<\rho^{N}$. First, observe that if $V\subseteq\mathbb{T}^{d}$
is an ellipsoid, then $T_{B}^{-1}V$ consists of $|\det B|$ congruent
ellipsoids of diameter $\rho\diam V$, and the distance between the
center of mass of any two of them is $1/\left\Vert B\right\Vert =2c$.
Now proceed by induction. $N=1$ is trivial from the definition of
$\mathcal{U}$. Suppose that the claim holds for $\mathcal{U}_{N-1}$.
Given $W\in\mathcal{U}_{N}$, write $W=U\cap T_{B}^{-1}V$ for some
$U\in\mathcal{U}$ and $V\in\mathcal{U}_{N-1}$. By choice of $\mathcal{U}$
we know that $U$ has of diameter $<c$, so it can intersect at most
one of the $c$-separated ellipsoids that constitute $T_{B}^{-1}V$.
Each of these has diameter $<\rho\diam V<\rho\cdot\rho^{N-1}=\rho^{N}$,
as claimed. 
\end{proof}
It follows that 
\begin{equation}
H_{top}(X,K)\geq\dim(X,\rho^{K})\label{eq:HK-bounds-c-delta-to-the-K}
\end{equation}
We emphasize, however, that in general the elements of $\mathcal{U}_{N}$
are exponentially far from being balls, and the ratio of the two sides
of (\ref{eq:HK-bounds-c-delta-to-the-K}) can be exponential in $K$.
Nevertheless, by (\ref{eq:HK-bounds-c-delta-to-the-K}), 
\[
h_{top}(X)=0\qquad\Longrightarrow\qquad\dim_{B}X=0.
\]
More quantitatively, we have
\begin{lem}
\label{lem:cr-bound-in-terms-of-HK}Let $\mathcal{U}$ be as above
and let $X$ be $T_{B}$-invariant. If $H_{top}(X,K)<h$ then for
every $r<\rho^{K/h}$, 
\[
\dim(X,r)\leq O_{\rho,d}(h).
\]
\end{lem}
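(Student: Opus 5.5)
We prove the bound by submultiplicativity of covering numbers under the dynamics, just as in Lemma \ref{lem:small-dimension-propogates-for-a-sets}, but now with the refined covers $\mathcal{U}_{K}$ in place of dyadic intervals. The elements of $\mathcal{U}_{K}$ are not balls, so we compare them with balls only at the end, using (\ref{eq:UN-diameter-bound}).

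\emph{Submultiplicativity.} For $m\geq1$ we have
\[
\mathcal{U}_{mK}=\bigvee_{j=0}^{m-1}T_{B}^{-jK}\mathcal{U}_{K}.
\]
Suppose $\mathcal{V}\subseteq\mathcal{U}_{K}$ covers $X$ with $|\mathcal{V}|=\cov(X,\mathcal{U}_{K})$. Since $T_{B}^{jK}X\subseteq X$, the family $\mathcal{V}$ also covers $T_{B}^{jK}X$ for every $j$. Hence the sets
\[
V_{0}\cap T_{B}^{-K}V_{1}\cap\cdots\cap T_{B}^{-(m-1)K}V_{m-1},\qquad V_{j}\in\mathcal{V},
\]
cover $X$. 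This gives
\[
\cov(X,\mathcal{U}_{mK})\leq\cov(X,\mathcal{U}_{K})^{m}<2^{mKh},
\]
that is, $H_{top}(X,mK)<h$ for all $m$.

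\emph{Passage to balls.} By (\ref{eq:UN-diameter-bound}), every element of $\mathcal{U}_{mK}$ has diameter at most $\rho^{mK}$, so it lies in a ball of radius $\rho^{mK}$. Therefore
\[
\cov(X,\rho^{mK})\leq2^{mKh}.
\]
Now take $r<\rho^{K/h}$, and choose $m$ so that $\rho^{(m+1)K}<r\leq\rho^{mK}$. Such an $m\geq1$ exists, assuming without loss of generality that $h\leq1$ (otherwise the claim is trivial since $\dim(X,r)\leq d$). Interpolating with a factor $O_{d}(1)$ for covering a $\rho^{mK}$-ball by balls of radius $\rho^{(m+1)K}$ is too costly, since the ratio is $\rho^{-K}$. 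Instead I would cover directly at scale $\rho^{(m+1)K}$:
\[
\log\cov(X,r)\leq\log\cov(X,\rho^{(m+1)K})\leq(m+1)Kh.
\]
Dividing by $\log(1/r)\geq mK\log(1/\rho)$ yields
\[
\dim(X,r)\leq\frac{(m+1)h}{m\log(1/\rho)}\leq\frac{2h}{\log(1/\rho)}=O_{\rho}(h).
\]

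\emph{Main obstacle.} The only delicate point is the small-$m$ regime, where $r$ sits just below $\rho^{K}$. The hypothesis $r<\rho^{K/h}$ is what handles it: it forces $\log(1/r)\geq(K/h)\log(1/\rho)$. So even in the crude worst case, where we bound $\log\cov(X,r)$ by $(m+1)Kh$ with $m$ possibly equal to $1$, the normalization by $\log(1/r)$ controls the ratio. I would double-check that $\rho<1$ and that the partition $\mathcal{U}$ does satisfy (\ref{eq:UN-diameter-bound}); these are both given earlier in the paper. The dependence on $d$ enters only through constants such as $|\mathcal{U}|$ if one wants the bound for $r$ not of the form $\rho^{mK}$ with the finer comparison above.
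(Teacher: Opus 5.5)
Your proof is correct and is essentially the paper's argument: submultiplicativity $\cov(X,\mathcal{U}_{mK})\leq\cov(X,\mathcal{U}_{K})^{m}$ (which you justify more carefully than the paper, via $T_{B}^{jK}X\subseteq X$), comparison with balls through (\ref{eq:UN-diameter-bound}), and interpolation by covering directly at scale $\rho^{(m+1)K}$ and dividing by $\log(1/r)\geq mK\log(1/\rho)$. The only difference is cosmetic: you accept the constant $2h/\log(1/\rho)$ using just $m\geq1$, while the paper uses $r<\rho^{K/h}$ to force $m$ large and get $(1+O(1/m))h$, so the hypothesis plays a smaller role in your version than your closing paragraph suggests.
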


\begin{proof}
Using the relations $\cov(X,\mathcal{V}\lor\mathcal{W}$)$\leq\cov(X,\mathcal{V})\cdot\cov(X,\mathcal{W})$
and $\cov(X,T_{B}^{-1}\mathcal{V})=\cov(X,\mathcal{V})$, we see that
for every $m\in\mathbb{N}$, 
\begin{align*}
\cov(X,\mathcal{U}_{mK}) & =\cov(X,\bigvee_{n=0}^{mK-1}T_{B}^{-n}\mathcal{U})\\
 & \leq\prod_{\ell=1}^{m}\cov(X,\bigvee_{n=(\ell-1)K}^{\ell K-1}T_{B}^{-n}\mathcal{U})\\
 & =\cov(X,\mathcal{U}_{K})^{m}.
\end{align*}
Taking logarithms and dividing by $K$ we find that $H_{top}(X,mK)\leq H_{top}(X,K)$,
which, together with (\ref{eq:HK-bounds-c-delta-to-the-K}) yields
\[
\dim(X,\rho^{mK})\leq H_{top}(X,K)<h.
\]
This proves the claim when $r$ is an integer power of $\rho^{K}$.
For $\rho^{(m+1)K}<r<\rho^{mK}$ we interpolate in the standard way:
Use the bound we got for $\cov(X,\rho^{(m+1)K})$ to write
\[
\log\cov(X,r)\leq\log\cov(X,\rho^{(m+1)K})<h(m+1)K\log(1/\rho).
\]
Dividing by log(1/r) and then $r<\rho^{mK}$, we get
\[
\dim(X,r)=\frac{\log\cov(X,r)}{\log(1/r)}<(1+O(\frac{1}{m}))h.
\]
Since taking $r$ small amounts to taking $m$ large, this proves
the lemma.
\end{proof}

\subsection{\label{subsec:Substantial-measures-lead-to-large-orbits}Large $T_{B}$-orbits
from positive-entropy measures }

The analog of Lemma \ref{lem:full-entropy-implies-density} for $T_{B}$
says that if a measure on $\mathbb{T}^{d}$ have (normalized) entropy
near $d$ at small scales, then it is supported on points with nearly
dense $T_{B}$-orbits. This will not be useful for us, for in the
proof of Theorem \ref{thm:main-multi-d} we will want to apply this
to measures with positive, but far from maximal, entropy. The lemma
below provides an alternative.

We continue to fix the cover $\mathcal{U}$ from the previous section,
whose atoms are smaller than the expansivity constant $c$ of $T_{B}$. 

Given $K\in\mathbb{N}$ and \emph{$h>0$, }let\emph{
\begin{align}
\Omega_{K,h} & =\{x\in\mathbb{T}^{d}\mid H_{top}(\overline{O_{B}(x)},K)<h\}\nonumber \\
 & =\bigcup\{X\subseteq\mathbb{T}^{d}\mid X\text{ is a \ensuremath{T_{B}}-subsystem, and \ensuremath{H_{top}(X,K)<h}}\}.\label{eq:def-of-Omega-K-h}
\end{align}
}If $h$ is not too large, its complement has non-empty interior.
\begin{lem}
\label{lem:measure-on-Omega-K-h-have-small-entropy}For all $K\in\mathbb{N}$
and $\eta,h>0$ there exists $r_{0}>0$ such that, for all $r<r_{0}$,
if $\mu\in\mathcal{P}(\mathbb{T}^{d})$ satisfies $H(\mu,r)>\eta\log(1/r)$,
then $\mu(\Omega_{K,h})<1-\frac{\eta}{d}+\frac{3h}{d}$.
\end{lem}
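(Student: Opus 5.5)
The approach is to show that $\Omega_{K,h}$ is contained in a set with small covering numbers at scale $r$, and then to compare entropies. By definition $\Omega_{K,h}$ is the union of all $T_B$-subsystems $X$ with $H_{top}(X,K)<h$, that is, with $\cov(X,\mathcal{U}_K)<2^{hK}$. Each such $X$ is covered by fewer than $2^{hK}$ elements of $\mathcal{U}_K$. Since $\mathcal{U}_K$ is a fixed finite cover, there are only finitely many collections of fewer than $2^{hK}$ elements of $\mathcal{U}_K$; let $W_1,\dots,W_M$ be their unions, with $M$ depending only on $K,h$. For each $j$ let $X_j$ be the union of all $T_B$-subsystems contained in $W_j$ with $H_{top}(\cdot,K)<h$. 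Then $\overline{X_j}$ is $T_B$-invariant. Because $\cov(\cdot,\mathcal{U}_K)$ of an arbitrary subset of $W_j$ is at most the number of pieces, we have $\cov(X_j,\mathcal{U}_K)<2^{hK}$.

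The key is the argument of Lemma~\ref{lem:cr-bound-in-terms-of-HK}, which only uses the submultiplicativity $\cov(X,\mathcal{U}_{mK})\le\cov(X,\mathcal{U}_K)^m$ for $T_B$-invariant $X$. This gives $\cov(X,\mathcal{U}_{mK})\le 2^{hmK}$ for each invariant $X$ in the union. To pass to the union $X_j$ itself, I will use instead that $T_B^{\ell K}X_j\subseteq X_j$, together with the fact that $T_B$ maps each subsystem into itself. Then each refined block $T_B^{-\ell K}\mathcal{U}_K$ is still covered by at most $2^{hK}$ cells on $X_j$, and I conclude
\[
\cov(X_j,\mathcal{U}_{mK})\le 2^{hmK}.
\]
By \eqref{eq:UN-diameter-bound} the cells of $\mathcal{U}_{mK}$ have diameter at most $\rho^{mK}$. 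Hence $\Omega_{K,h}\subseteq\bigcup_j X_j$ is covered by $M\cdot 2^{hmK}$ sets of diameter at most $\rho^{mK}$, for every $m$. Interpolating as in Lemma~\ref{lem:cr-bound-in-terms-of-HK}, for $r<r_0$ we get
\[
\log\cov(\Omega_{K,h},r)\le \log M+h(1+o(1))\frac{\log(1/r)}{\log(1/\rho)}+O(K).
\]
Since $\rho<1/2$ could fail, I keep the factor $1/\log(1/\rho)$; absorbing the constants is where the slack ``$3h$'' comes from. Concretely, I aim for $\log\cov(\Omega_{K,h},r)\le 2h\log(1/r)+O_{K,h}(1)$, possibly after a harmless normalization of $\rho$ relative to the norm. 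I expect this normalization to be the main obstacle, and will check it against the constants.

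For the entropy comparison, write $p=\mu(\Omega_{K,h})$ and decompose $\mu=p\,\mu_1+(1-p)\mu_2$, with $\mu_1$ supported on $\Omega_{K,h}$. By concavity and near-additivity of entropy,
\[
H(\mu,r)\le pH(\mu_1,r)+(1-p)H(\mu_2,r)+1.
\]
Using \eqref{eq:covering-bounds-entropy}, $H(\mu_1,r)\le 2h\log(1/r)+O(1)$. Trivially $H(\mu_2,r)\le d\log(1/r)+O(1)$. Combining these with $H(\mu,r)>\eta\log(1/r)$ and dividing by $\log(1/r)$ gives
\[
\eta< 2hp+d(1-p)+o(1)\le 2h+d(1-p)+o(1).
\]
Hence $p<1-\frac{\eta}{d}+\frac{2h}{d}+o(1)$, and taking $r_0$ small enough that the $o(1)$ term is below $h/d$ yields $p<1-\frac{\eta}{d}+\frac{3h}{d}$.
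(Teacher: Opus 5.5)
Your proof is essentially the paper's: it also enumerates the subfamilies $\mathcal{V}^1,\dots,\mathcal{V}^L\subseteq\mathcal{U}_K$ with fewer than $2^{hK}$ elements, uses forward invariance to cover every subsystem in $\Omega_{K,h}$ by some $\bigvee_{m=0}^{M-1}T_B^{-Km}\mathcal{V}^i\subseteq\mathcal{U}_{KM}$ (your sets $X_j$ just repackage this), and then splits $\mu$ into its parts on and off the union of these cells, bounding their entropies by $2h\log(1/r)$ and $d\log(1/r)$ exactly as you do. The normalization you flag as the main obstacle is not one: $\rho=\frac{1}{2b_{\min}}<\frac{1}{2}$ because $B$ is expanding, so $2^{hmK}\le\rho^{-hmK}$, and your covering estimate already gives $\log\cov(\Omega_{K,h},r)\le h\log(1/r)+O_{K,h}(1)$ (and even if one only used a per-step contraction factor of $1/b_{\min}$, the same computation would give the conclusion with $3h$ replaced by $O_B(h)$, which is all the later argument needs).
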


\begin{rem*}
The constant $3$ in front of $h$ is arbitrary, any constant larger
than $1$ would suffice.
\end{rem*}
\begin{proof}
Fix $\eta,h,K$ and let $\mathcal{V}^{1},\ldots,\mathcal{V}^{L}$
be an enumeration of all subsets of $\mathcal{U}_{K}$ with fewer
than $2^{hK}$ elements. Note that $L$ depends on $K,h$ but we can
bound $L\leq2^{|\mathcal{U}_{K}|}$, independently of $h$.

If $X$ is a subsystem with $H_{top}(X,K)<h$, then by definition,
$X$ is covered by $\mathcal{V}^{i}$ for some $1\leq i\leq L$. Moreover,
it is covered by $T^{-Km}\mathcal{V}^{i}$ for all $m\geq0$, and
hence by $\bigvee_{m=0}^{M-1}T^{-Km}\mathcal{V}^{i}$ for all $M$.
The latter family is of cardinality $\leq|\mathcal{V}^{i}|^{M}<2^{KMh}$. 

Thus, for each $M$, the set $\Omega_{K,h}$ is covered by the union
of these families, i.e.~by
\[
\mathcal{W}_{M}=\bigcup_{i=1}^{L}\bigvee_{m=0}^{M-1}T^{-Km}\mathcal{V}^{i},
\]
and by the previous discussion,
\begin{equation}
|\mathcal{W}_{M}|\leq L\cdot2^{KMh}.\label{eq:WM-cardinality}
\end{equation}

Let $\rho$ be as in (\ref{eq:UN-diameter-bound}) and set 
\[
r_{0}=\rho^{K}
\]
for an $M$ which we shall determine later. By (\ref{eq:WM-cardinality}),
there exists $M_{0}\in\mathbb{N}$, depending on $L$ (hence on $K$,
but not on $\eta,h,$), such that for all $M>M_{0}$,
\begin{equation}
|\mathcal{W}_{M}|\leq r_{0}^{-2hM/M_{0}}\label{eq:WM-cardinality-in-terms-of-r0}
\end{equation}
At the same time, by \ref{eq:UN-diameter-bound} and the fact that
$\mathcal{W}_{M}\subseteq\mathcal{U}_{KM}$, every $W\in\mathcal{W}_{M}$
is a set of diameter $\leq\rho^{KM}=r_{0}^{M}$. 

Now suppose that $r<r_{0}$ and that $\mu\in\mathcal{P}(\mathbb{T}^{d})$
satisfies 
\begin{equation}
H(\mu,r)>\eta\log(1/r).\label{eq:mu-r-entropy}
\end{equation}
Choose $M$ so that $r=r_{0}^{M/M_{0}}$ (we ignore rounding errors,
which would lead to a $O(1/M_{0})$ term in the exponent, but do not
affect the argument). Let 
\[
E=\bigcup\{W\mid W\in\mathcal{W}_{M}\},
\]
so that $\Omega_{K,h}\subseteq E$, and write $\mu_{E}$, $\mu_{E^{c}}$
for the conditional measure of $\mu$ on $E$ and $E^{c}=\mathbb{T}^{d}\setminus E$
. Then by basic properties of entropy,
\begin{equation}
H(\mu,r)=\mu(E)H(\mu_{E},r)+(1-\mu(E))H(\mu_{E^{c}},r)+O(1)\label{eq:entropy-on-E-and-Ec}
\end{equation}
By (\ref{eq:WM-cardinality-in-terms-of-r0}), the measure $\mu_{E}$
is supported on $r^{-2h}$ sets of diameter $r$, giving 
\begin{equation}
H(\mu_{E},r)\leq2h\log(1/r).\label{eq:entropy-on-E-bound}
\end{equation}
On the other hand, we have the trivial bound 
\begin{equation}
H(\mu_{E^{c}},r)\leq d\log(1/r).\label{eq:entropy-on-Ec-bound}
\end{equation}
Combining (\ref{eq:mu-r-entropy}), (\ref{eq:entropy-on-E-and-Ec}),
(\ref{eq:entropy-on-E-bound}) and (\ref{eq:entropy-on-Ec-bound}),
dividing by $\log(1/r)$ we get 
\begin{align*}
\eta & \leq\frac{H(\mu,r)}{\log(1/r)}\\
 & \leq\frac{H(\mu_{E},r)}{\log(1/r)}+(1-\mu(E))\frac{H(\mu_{E^{c}},r)}{\log(1/r)}+O(\frac{1}{\log(1/r)})\\
 & \leq2h+(1-\mu(E))d+O(\frac{1}{\log(1/r)}).
\end{align*}
Assuming $M_{0}$ is large (and hence $r$ is small) relative to $h$
and rearranging, we obtain the desired bound on $\mu(E)$.
\end{proof}

\subsection{\label{subsec:Largeness-of-Euclidean-orbits}Distribution of $A^{n}B^{-[\alpha n]}u$ }

The theorem below is a special case of \cite[Section 4]{Hochman2025a}
with the roles of $A,B$ reversed. It is simpler because in \cite{Hochman2025a}
it is not assumed that $A,B$ commute. We provide the elementary proof
for completeness. The analysis is similar to that in Berend's work
\cite{Berend1983}.
\begin{thm}
\label{thm:Linear-orbits}Let $A,B,U^{i},a_{i},b_{i}$ be as in Section
\ref{subsec:Algebraic-consequences-of-assumptions}. Let $U=\oplus_{i\in I}U^{i}$
be some $B$-invariant (equiv. $A$-invariant) subspace that is expanded
by $B$, and set
\[
\alpha=\max\{\frac{\log|a_{i}|}{\log|b_{i}|}\mid i\in I\}
\]
(this is well defined since $|b_{i}|>1$ for $u\in U$, but $\alpha$
may be positive, zero or negative). Then there exist
\begin{itemize}
\item A torus $\mathbb{T}^{k}$,
\item An element $\gamma\in\mathbb{T}^{k}$ generating an infinite closed
group $\Gamma\leq\mathbb{T}^{k}$ endowed with Haar measure $\tau$,
\item A function $f:\mathbb{T}^{k}\rightarrow U$ satisfying
\begin{itemize}
\item [a.] $f$ is real-analytic on an open subset of $\mathbb{T}^{d}$
of full $\tau$-measure.
\item [b.] $\pi^{i}f|_{\Gamma}$ is non-constant if $U^{i}\leq U$ and
$\log a_{i}/\log b_{i}=\alpha$. Otherwise, $\pi^{i}f|_{\Gamma}=0$.
\end{itemize}
\end{itemize}
such that the following holds: If $u=\sum_{i\in I}c_{i}u^{i}\in U$
with $0\neq c_{i}\in\mathbb{F}^{i}$, and if we set $L=\diag(c_{i})\in\hom(U,U)$
and 
\[
u_{n}=A^{n}B^{-[\alpha n]}u,
\]
then
\begin{enumerate}
\item $u_{n}=L(f(n\gamma)+o(1))$.
\item \textup{Consequently, $(u_{n})$ equidistributes for $Lf\tau$. }
\end{enumerate}
\end{thm}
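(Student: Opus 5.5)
Since $A,B$ act on each $U^{i}\cong\mathbb{F}^{i}$ as multiplication by $a_{i},b_{i}$, the plan is to compute coordinatewise. Writing $u=\sum c_{i}u^{i}$, we get
\[
u_{n}=\sum_{i\in I}c_{i}\,a_{i}^{n}b_{i}^{-[\alpha n]}u^{i},
\]
so the first step is to understand the scalar sequences $a_{i}^{n}b_{i}^{-[\alpha n]}\in\mathbb{F}^{i}$. Put $\alpha_{i}=\log|a_{i}|/\log|b_{i}|$. Then
\[
|a_{i}^{n}b_{i}^{-[\alpha n]}|=|b_{i}|^{\alpha_{i}n-[\alpha n]}=|b_{i}|^{(\alpha_{i}-\alpha)n}\,|b_{i}|^{\{\alpha n\}}.
\]
If $\alpha_{i}<\alpha$, this tends to $0$ exponentially, because $|b_{i}|>1$ on $U$. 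These coordinates go into the $o(1)$ term, and we set $\pi^{i}f=0$ for them. For the indices with $\alpha_{i}=\alpha$, the modulus is $|b_{i}|^{\{\alpha n\}}$, which is a function of $n\alpha\bmod 1$. In the complex case we also write $a_{i}=|a_{i}|e^{2\pi i\phi_{i}}$ and $b_{i}=|b_{i}|e^{2\pi i\psi_{i}}$. The argument is then $n\phi_{i}-[\alpha n]\psi_{i}=n(\phi_{i}-\alpha\psi_{i})+\{\alpha n\}\psi_{i}\pmod 1$, which is a function of the pair $(n\alpha,\,n(\phi_{i}-\alpha\psi_{i}))$ mod $1$.

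This suggests the choice of torus. Take $\mathbb{T}^{k}$ with coordinates indexed by $\alpha$ and by the numbers $\beta_{i}=\phi_{i}-\alpha\psi_{i}$ for the complex indices with $\alpha_{i}=\alpha$. Let $\gamma=(\alpha,\beta_{i},\ldots)\bmod 1$ and $\Gamma=\overline{\{n\gamma\}}$. On $\mathbb{T}^{k}$, with coordinates $(s,t_{i},\ldots)$ lifted to $[0,1)$, define
\[
\pi^{i}f(s,t)=|b_{i}|^{s}e^{2\pi i(t_{i}+s\psi_{i})}u^{i}
\]
in the complex case and $\pm|b_{i}|^{s}u^{i}$ in the real case; the real-case sign $\operatorname{sgn}(a_{i})^{n}\operatorname{sgn}(b_{i})^{[\alpha n]}$ is handled by adding a $\mathbb{Z}/2$-type coordinate, namely $\tfrac12$ in the corresponding places. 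This $f$ is real-analytic off the hyperplanes $\{s=0\}$ (and the analogous discontinuity sets), which have Haar measure zero in $\Gamma$ provided $\alpha\notin\mathbb{Q}$. With these definitions, $u_{n}=L(f(n\gamma)+o(1))$ holds exactly up to the decaying coordinates, and this gives conclusion 1.

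Several points need checking. \textbf{(i)} The group $\Gamma$ is infinite. It suffices to show $\alpha\notin\mathbb{Q}$, or at least that some coordinate of $\gamma$ is irrational. If $\alpha=p/q$ and all the phases were rational, then $a_{i}^{q m}=b_{i}^{p m}$ for a suitable $m$, contradicting the multiplicative independence of $a_{i},b_{i}$ from the earlier lemma. If $\alpha$ is rational but some $\beta_{i}$ is irrational, then $\Gamma$ is still infinite. \textbf{(ii)} $\pi^{i}f|_{\Gamma}$ is nonconstant. Its modulus $|b_{i}|^{s}$ varies if the $s$-projection of $\Gamma$ is infinite. If instead $\alpha\in\mathbb{Q}$, the phase $t_{i}$ varies as soon as the relevant $\beta_{i}$ is irrational, and independence again rules out the case where both are degenerate. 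Some care is needed when $\alpha$ is rational, because then $\{s=0\}$ may carry positive Haar mass on $\Gamma$. In that case I will shift the choice of lift, replacing the fundamental domain $[0,1)$ by a generic translate, so that the discontinuity set is still $\tau$-null.

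Conclusion 2 then follows from Weyl. The sequence $n\gamma$ equidistributes for Haar measure $\tau$ on $\Gamma$. Since $f$ is bounded and continuous off a $\tau$-null closed set, $f(n\gamma)$ equidistributes for $f\tau$, and the $o(1)$ error together with the fixed linear map $L$ gives equidistribution for $Lf\tau$. I expect the main obstacle to be the nonconstancy and measure-zero bookkeeping in the rational-$\alpha$ and real-sign cases, rather than the computation itself.
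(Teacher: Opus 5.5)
Your proposal is the paper's proof. The steps are the same:
\begin{itemize}
\item the coordinatewise computation of $a_i^nb_i^{-[\alpha n]}$;
\item exponential decay of the coordinates with $\log|a_i|/\log|b_i|<\alpha$;
\item rewriting the remaining coordinates as $|b_i|^{\{\alpha n\}}e\bigl(n(\varphi_i-\alpha\psi_i)+\psi_i\{\alpha n\}\bigr)$, a piecewise-analytic function of rotation by $(\alpha,\varphi_i-\alpha\psi_i)$;
\item non-torsion of $\gamma$ from the multiplicative independence of $a_i,b_i$.
\end{itemize}
Sharing one $\alpha$-coordinate instead of the paper's $|I|$ copies is cosmetic.

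Your extra bookkeeping addresses two points the paper passes over: it sets $\varphi_i=\psi_i=0$ for real eigenvalues and never discusses rational $\alpha$. However, both of your fixes are slightly off.

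First, take a negative real $b_i$. The factor $(-1)^{[\alpha n]}$ depends on $\alpha n\bmod 2$. Here $\alpha$ is irrational, and $(-1)^{[\alpha n]}$ is not an a.e.-continuous function of $(n\bmod 2,\{\alpha n\})$. So adding a $\mathbb{Z}/2$-coordinate $\tfrac12$ does not yield an admissible $f$. Instead, use your complex formula with $\varphi_i,\psi_i\in\{0,\tfrac12\}$, i.e.\ the coordinate $\varphi_i-\alpha\psi_i$.

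Second, take $\alpha=p/q$. A generic translate $[c,c+1)$ of the fundamental domain changes the lift of some of the values $j/q$. That changes $f$ on $\Gamma$ and breaks $u_n=L(f(n\gamma)+o(1))$. You need $c\in(-1/q,0)$: this leaves $f|_\Gamma$ unchanged and moves the discontinuity off $\Gamma$, so (a) holds literally. For conclusion 2 alone no fix is needed, since $\Gamma\cap\{s=0\}$ is relatively clopen in $\Gamma$ and $f|_\Gamma$ is already continuous.
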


\begin{proof}
For every $n,\ell\in\mathbb{N}$, the matrix $A^{n}B^{-\ell}$ acts
on $U^{i}$ by multiplication by the scalar $a_{i}^{n}b_{i}^{-\ell}$.
Write $\varphi_{i,},\psi_{i}$ for the arguments of $a_{i},b_{i}$,
respectively (if $\mathbb{F}_{i}=\mathbb{R}$ then $\varphi_{i}=\psi_{i}=0$).
Writing $e(t)=e^{2\pi it}$, we have the identity 
\[
a_{i}^{n}b_{i}^{-[\alpha n]}=|a_{i}|^{n}|b_{i}|^{-[\alpha n]}\cdot e(\varphi_{i}n-\psi_{i}[\alpha n]).
\]
Writing $\{x\}$ for the fractional part of $x$ and noting that $[\alpha n]=\alpha n-\{\alpha n\}$,
the real and imaginary terms above become
\begin{align*}
|a_{i}|^{n}|b_{i}|^{-[\alpha n]} & =|a_{i}|^{(\frac{\log|d_{i}|}{\log|c_{i}|}-\alpha)n}\cdot|b_{i}|^{\{\alpha n\}}\\
e(\varphi_{i}n-\psi_{i}[\alpha n]) & =e(\varphi_{i}n-\psi_{i}\alpha n)\cdot e(\psi_{i}\{\alpha n\}).
\end{align*}
The behavior now depends on $i$. There are two cases:
\begin{description}
\item [{$\alpha>\log|a_{i}|/\log|b_{i}|$~:}] Then the real part is a
product of an exponentially decaying term and a bounded term. hence
$a_{i}^{n}b_{i}^{-[\alpha n]}\rightarrow0$ exponentially.
\item [{$\alpha=\log|a_{i}|/\log|b_{i}|$~:}] Then 
\[
a_{i}^{n}b_{i}^{-[\alpha n]}=|a_{i}|^{\{\alpha n\}}\cdot e(\varphi_{i}n-\psi_{i}\alpha n)\cdot e(\psi_{i}\{\alpha n\}).
\]
This is a piecewise smooth image of the orbit of $0\in\mathbb{T}^{2}$
under translation by 
\[
\gamma_{i}=(\alpha,\varphi_{i}-\alpha\psi_{i}).
\]
\end{description}
Putting this information together for all $i\in I$ and applying it
to $u=\sum_{i\in I}c_{i}u^{i}$, we obtain (1) and (2) with $\gamma=(\gamma_{i})_{i\in I}\in\mathbb{T}^{2|I|}$.

It remains only to verify that $\gamma$ is not a torsion element.
For this it suffices to show that when $i\in I$ is such that $\alpha=\frac{\log|a_{i}|}{\log|b_{i}|}$,
the numbers $\alpha,\varphi_{i}-\alpha\psi_{i}$ cannot both be rational.
Indeed, for such an $i$ suppose that $\alpha=p/q$ and $\varphi_{i}-\alpha\psi_{i}=p'/q$
for some integers $p,q,p',q'$. Then we would have
\[
a_{i}^{q}=e^{q\log|a_{i}|}\cdot e(q\varphi_{i})=e^{p\log|b_{i}|}e(q\alpha\psi_{i}+p')=b_{i}^{p}.
\]
which contradicts multiplicative independence of $a_{i},b_{i}$.
\end{proof}
Note that $\alpha,k,\gamma,f$ depend on the subspace $U$, and there
are finitely many choices for $U$, since it is a sum of $C$-irreducible
(and expanded) subspaces.

We will use the following property of the limiting distribution $\theta$
in the theorem, which should be seen as a generalization of Lemma
\ref{lem:irrational-rotation-along-positive-density-set}.
\begin{cor}
\label{cor:theta-have-substantial-entropy} The limiting distribution
$\theta=Lf\tau$ of the $u_{n}$ is a finite convex combination of
absolutely continuous measures on non-trivial smooth submanifolds
of $\mathbb{T}^{d}$. In particular the dimension $\dim\theta$ exists
pointwise (in the sense of (\ref{eq:pointwise-dim})) and is equal
to at least one. 
\end{cor}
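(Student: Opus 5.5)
We prove the corollary by unwinding the structure of $f$ and $\tau$ from Theorem \ref{thm:Linear-orbits}.

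\textbf{Structure of $f\tau$.} The group $\Gamma=\overline{\{n\gamma\}}$ is a closed infinite subgroup of $\mathbb{T}^{k}$, so it is a finite union of cosets of a subtorus $\Gamma_{0}\cong\mathbb{T}^{m}$ with $m\geq1$. Haar measure $\tau$ on $\Gamma$ is the normalized sum of Lebesgue measures on these cosets. By part (a) of the theorem, $f$ is real-analytic on an open set of full $\tau$-measure. Away from a $\tau$-null closed set (the discontinuity locus of the fractional parts $\{\alpha n\}$ and of the angular coordinates), we can decompose each coset into finitely many open pieces $P_{j}$ on each of which $f$ is real-analytic. Since $L=\diag(c_{i})$ is invertible linear, $\theta=Lf\tau=\sum_{j}\tau(P_{j})\,Lf(\tau|_{P_{j}}/\tau(P_{j}))$, which is a finite convex combination. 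It therefore suffices to treat a single piece $P$.

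\textbf{Each piece is a.c.\ on a submanifold.} On $P$, an analytic map $g=Lf\colon P\to U$, the rank of $Dg$ equals its generic (maximal) value $s$ off a proper analytic subvariety, which is $\tau$-null. By part (b) of the theorem, some coordinate $\pi^{i}f|_{\Gamma}$ is nonconstant, and therefore $s\geq1$.

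Off that null set we apply the constant rank theorem. Locally, $g$ is a submersion onto an $s$-dimensional embedded analytic submanifold, and the push-forward of a smooth density under a submersion is absolutely continuous with respect to $s$-dimensional volume on the image. Cover the full-measure set by countably many such charts; finitely many capture all but $\epsilon$ of the mass. Alternatively, use analyticity and compactness of the closure of $P$ minus a neighborhood of the singular set to obtain finiteness directly. This expresses $g\tau|_{P}$ as a convex combination of absolutely continuous measures on nontrivial $s$-dimensional submanifolds. Reducing mod $\mathbb{Z}^{d}$ gives the same on the torus.

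\textbf{Exact dimension.} If $\eta\ll\mathrm{vol}_{M}$ for an $s$-dimensional $C^{1}$ submanifold $M$, then for $\eta$-a.e.\ $x$ we have $\eta(B_{r}(x))\asymp r^{s}$ by Lebesgue differentiation on $M$, since balls meet $M$ in sets comparable to $s$-balls near $x$. Hence the local dimension equals $s$. For the convex combination, the local dimension at a point typical for a component equals the minimum over components through that point, up to mutual singularity. So the pointwise limit exists $\theta$-a.e.\ and is at least $1$. If the statement "$\dim\theta$ exists" is read as a single value, it is constant when all pieces have the same generic rank. This holds because the pieces are translates related by the group structure and $f$ is given by the same analytic formula up to the fractional-part jumps, so the generic rank is the same; otherwise we only need the lower bound $\geq1$ that is used later.

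\textbf{Main obstacle.} The main difficulty is the finiteness and rank bookkeeping: the pieces where fractional parts jump, and the passage from countably to finitely many charts. I expect to handle it by noting that $f$ is given explicitly by products of the functions $|b_{i}|^{\{t\}}e(\cdot)$, which are analytic on finitely many boxes of the torus. The rank is then computed directly from the nonconstant coordinate, giving $s\geq1$ everywhere off a null set.
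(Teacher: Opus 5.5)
Your proposal is correct and follows essentially the same route as the paper: analyticity of $f$ splits $f\tau$ into absolutely continuous pieces on submanifolds, and $L$ acts invertibly on the subspace containing the image of $f$; you also fill in the constant-rank and differentiation details that the paper's short proof leaves implicit. One step needs tightening: non-constancy of $\pi^{i}f|_{\Gamma}$ does not by itself give positive rank on every analytic piece, so argue instead from the explicit formula, by which the differential of $f|_{\Gamma}$ at $p$ is $\diag(\pi^{i}f(p))$ composed with a fixed linear map that is non-zero on the tangent space of $\Gamma$ because each relevant $\gamma_{i}$ is non-torsion (shown in the proof of Theorem \ref{thm:Linear-orbits}), hence the rank is a constant $s\geq1$, which yields finitely many charts and a single value of $\dim\theta$ (overlaps between components being handled by the Besicovitch density theorem).
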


\begin{proof}
In the notation of the theorem, the function $f$ is real-analytic
on an open set of full $\tau$ measure, so $f\tau$ decomposes as
absolutely continuous measures on finitely many submanifolds of $U$.
By the second stated property of $f$, the image of $f$ is contained
in the subspace $U'\leq U$ that is the sum of the $U^{i}\leq U$
for which $\pi^{i}u\neq0$. These correspond to the non-zero diagonal
entries of $L$, so $L$ is a linear automorphism of $U'$. Hence,
$\theta=Lf\tau$ also decomposes into absolutely continuous measures
on finitely many submanifolds. 
\end{proof}

\subsection{\label{subsec:Proof-of-Theorem-multi-d}Proof of Theorem \ref{thm:main-multi-d}}

Let $A,B\in M_{d}(\mathbb{Z})$ and $\mu$ be as described at the
start of Section (\ref{sec:Proof-of-multi-d}). As in the proof on
$\mathbb{R}/\mathbb{Z}$, set 
\[
O_{B}(x)=\{T_{B}^{n}x\}_{n=1}^{\infty}.
\]
Since $T_{A},T_{B}$ commute, the set $T_{A}\overline{O_{B}(x)}$
is again $T_{B}$-invariant, and since $T_{A}$ is a finite-to one
factor map from $\overline{O_{B}(x)}$ to its image, we have $h_{top}(\overline{O_{B}(x)})=h_{top}(T_{A}\overline{O_{B}(x)})$.
Using $\overline{O_{B}(T_{A}x)}=T_{A}\overline{O_{B}(x)}$, this topological
entropy is also equal to $h_{top}(\overline{O_{B}(T_{A}x)})$. It
now follows from ergodicity of $\mu$ that this quantity is $\mu$-a.e.~constant,
and we set 
\[
\delta=\;\text{the \ensuremath{\mu}-a.s.~value of \ensuremath{h_{top}(\overline{O_{B}(x)}.T_{B})}}.
\]
We again divide the proof in two, first proving the theorem under
the assumption that $\delta>0$, and then ruling out $\delta=0$.

\subsection*{Step 1: $\delta>0$ implies that $\mu$-a.e.~orbit is $T_{B}$-dense}

The proof is identical to Step 1 in Section \ref{subsec:Proof-of-1-d},
except for two changes. Let 
\[
\dense_{B}(\varepsilon,k)=\{x\in\mathbb{T}^{d}\mid\{T_{B}^{n}x\}_{n=1}^{k}\text{ is \ensuremath{\varepsilon}-dense}\}.
\]

\begin{description}
\item [{Finding~the~point~$y$}] : If $\delta=h_{top}\overline{O_{B}(x)}>0$,
then by the variational principle, we find a $T_{B}$-invariant and
ergodic measure $\nu$ on $\overline{O_{B}(x)}$ with positive entropy,
and apply the equidistribution theorem \cite[Theorem 1.1]{Hochman2025a}
to $\nu$ with the roles of $T_{A},T_{B}$ reversed, to conclude that
$\frac{1}{N}\sum_{n=1}^{N}T_{A}^{n}\nu\rightarrow Lebesgue$. Passing
to a subsequence of the Cesaro averages, as explained at the end of
Step 1, we find a point $y\in\overline{O_{B}(x)}$ that uniformly
distributes under $T_{A}$ for averages taken along some $N_{k}\rightarrow\infty$.
\item [{Lebesgue~measure~of~$\dense_{B}(\varepsilon,k)$}] : Total irreducibility
of the $A,B$-action implies that $B$ has no roots of unity, so Lebesgue
measure is ergodic for $T_{B}$. This implies that Lebesgue-a.e.~$x$
has dense $T_{B}$ orbit and hence for $k$ large the set of points
in $\dense_{B}(\varepsilon,k)$ has Lebesgue measure $>1-\varepsilon$.
\end{description}
The rest of the proof carries over to the multidimensional case.

\subsection*{Step 2: Ruling out $\delta=0$ }

Assume by way of contradiction that $\delta=0$. 

\subsubsection*{Choosing $K$ with $H_{top}(\overline{O_{b}(x)},K)\approx0$ (quantified
in terms of $h,\varepsilon$)}

Fix parameters
\[
0<h\ll\varepsilon<\frac{1}{4d}.
\]
As we will see later it suffices to take $h$ to be small multiple
of $\varepsilon$ (depending on $B$).

Let $H_{top}(\cdot,\cdot)$ be as defined in (\ref{eq:def-of-HK}).
Then 
\begin{equation}
\lim_{K\rightarrow\infty}H_{top}(\overline{O_{B}(x)},K)=h_{top}(\overline{O_{B}(x)})=\delta=0\qquad\qquad\text{for \ensuremath{\mu}-a.e.~\ensuremath{x}}\label{eq:zero-entropy-limit}
\end{equation}
 and we can choose $K\in\mathbb{N}$ so that 
\begin{eqnarray}
\mu(x & \in & \mathbb{T}^{d}\mid H_{top}(\overline{O_{B}(x)},K)<\frac{1}{2}h)>1-\frac{1}{3}\varepsilon.\label{eq:definition-of-K}
\end{eqnarray}

\subsubsection*{Choosing a scale $r_{0}$ at which large dimension implies large
$T_{B}$-orbits}

Let $r_{0}>0$ be the constant resulting from applying Lemma \ref{lem:measure-on-Omega-K-h-have-small-entropy}
to the parameters $K,h$ above and $\eta=1-2\varepsilon$.

Without loss of generality we can assume that $r_{0}<\rho^{K/h}$,
where $\rho$ is the constant in (\ref{eq:UN-diameter-bound}). Then,
by Lemma \ref{lem:cr-bound-in-terms-of-HK}, for every $r<r_{0}$
we have
\begin{equation}
H_{top}(\overline{O_{B}(x)},K)<h\qquad\Rightarrow\qquad\dim(\overline{O_{B}(x)},r)<O(h).\label{eq:small-Htop-K-implies-small-c-r}
\end{equation}
The implicit constant in the bound $O(h)$ depends only on $B$.

\subsubsection*{Choosing $D_{1},\ldots,D_{L}\subseteq\mathbb{T}^{d}$ on which $x\rightarrow H_{K}(\overline{O_{B}(x)})$
is nearly constant}

Let $\mathcal{H}=\mathcal{H}(\mathbb{T}^{d})$ denote the space of
closed non-empty subsets of $\mathbb{T}^{d}$, endowed with the Hausdorff
measure. This is a compact space.

Fix $\sigma>0$ small enough that if $X,Y\in\mathcal{H}$ satisfy
$d_{\mathcal{H}}(X,Y)<\sigma$, and if $H_{top}(X,K)<\frac{1}{2}h$,
then $H_{top}(Y,K)<h$.

Partition $\mathbb{T}^{d}$ into measurable sets $D_{i}$ such that
$\{\overline{O_{B}(x)}\}_{x\in D_{i}}$ has diameter $<\sigma$ in
$\mathcal{H}$. By compactness of $\mathcal{H}$, we can assume the
partition is finite. Write 
\begin{equation}
Y_{i}=\overline{\bigcup_{x\in D_{i}}O_{B}(x)}.\label{eq:def-of-Yi}
\end{equation}
By choice of $\sigma$, if there exists $x\in D_{i}$ such that $H_{top}(\overline{O_{B}(x)},K)<\frac{1}{2}h$,
then $H_{top}(Y_{i},K)<h$. Hence, by our choice of $K$ (i.e.~by
(\ref{eq:definition-of-K})),
\[
\mu(\cup\{D_{i}\mid H_{top}(Y_{i},K)<h\})>1-\varepsilon.
\]
Renumber the sets in the union above as $D_{1},\ldots,D_{L}$ for
some $L\in\mathbb{N}$. Thus 
\begin{equation}
\sum_{i=1}^{L}\mu(D_{i})=\mu(\bigcup_{i=1}^{L}D_{i})>1-\varepsilon.\label{eq:Ai-union-measure}
\end{equation}

Let $\mathcal{D}$ denote the partition into $D_{1},\ldots,D_{L}$
and the complement $D'=\mathbb{T}^{d}\setminus\bigcup_{i=1}^{L}D_{i}$.

\subsubsection*{Interlude: comparison with the case $d=1$}

At this point of the proof when $d=1$, we were ready to apply Lemma
\ref{lem:irrational-rotation-along-positive-density-set} to the sequence
$b^{\{\alpha n\}}$ along visit times of an $a$-orbit point one of
the sets $D_{i}$, and this determined the scale $r$ at which we
would work. Now, however, the sequence playing the role of $b^{\{\alpha n\}}$
is derived from a sequence $B^{n-[\alpha n]}A^{n}u$ as in Theorem
\ref{thm:Linear-orbits}, which in turn depends on a vector $u$ which
we have not determined yet. For this reason, we postpone choosing
the scale $r$ until later.

\subsubsection*{Choosing the set $E\subseteq\mathbb{T}^{d}$ of ``good'' $T_{A}$-generic
points}

Let $E\subseteq\mathbb{T}^{d}$ be a set with $\mu(E)>1-\varepsilon$
on which convergence in the ergodic theorem holds uniformly for the
indicator functions of $D_{1},\ldots,D_{L}$, and of $\Omega_{K,h}$
(as defined in (\ref{eq:def-of-Omega-K-h})). This can be done by
repeated application of Egorov's theorem.

\subsubsection*{Choosing $x$ and $y(k)$ with $u=\lim_{k\rightarrow\infty}B^{N(y_{k})}(y_{k}-x)\protect\neq0$}

Choose a $x\in E$ which we fix henceforth. We can assume $x$ is
not isolated in $E$, because $\mu$ is non-atomic and $\mu(E)>0$.

For any $y\in E$ with $d(x,y)<1/2$, we can identify $y-x$ with
a vector in $\mathbb{R}^{d}$ of length $<1/2$. We then define
\begin{equation}
N(y)=\max\{N\in\mathbb{N}\cup\{0\}\mid\left\Vert B^{N}(y-x)\right\Vert <1/2\}\label{eq:def-of-Ny}
\end{equation}
and set 
\begin{equation}
u_{y}=B^{N(y)}(y-x)\in\mathbb{R}^{d}.\label{eq:def-of-uy}
\end{equation}
Thus $1/(2\left\Vert B\right\Vert )\leq\left\Vert u_{y}\right\Vert \leq1/2$.

Since $y\in E$ can be chosen arbitrarily close to $x$ and $u_{y}$
are bounded away from $0,\infty$ in $\mathbb{R}^{d}$, we can choose
points $y(k)\in E\setminus\{x\}$ such that $y(k)\rightarrow x$ and
\[
u=\lim_{k\rightarrow\infty}u_{y(k)}
\]
exists. Note that $1/(2\left\Vert B\right\Vert )\leq\left\Vert u\right\Vert \leq1/2$,
and that $N(y(k))\rightarrow\infty$ since $y(k)\rightarrow x$.

\subsubsection*{Applying Theorem \ref{thm:Linear-orbits}: defining $U,\alpha,\gamma,\Gamma,\tau,f,L$
and $\theta$}

Let $U=\oplus\{U^{i}\mid\pi^{i}u\neq0\}$ denote the smallest $B$-invariant
subspace containing $u$, and let $\alpha,\gamma,\Gamma,\tau,f,L$
be associated to $U$ and $u$ as in Theorem \ref{thm:Linear-orbits}.
Let $\theta=Lf\tau$ denote the limiting distribution of $A^{n}B^{-[\alpha n]}u$.

.

\subsubsection*{The scale $r<r_{0}$ at which $\theta$ looks sufficiently smooth}

Let $\sigma=\frac{1}{2}\min\{\mu(D_{1}),\ldots,\mu(D_{L})\}$ and
apply Corollary \ref{cor:theta-have-substantial-entropy} to the measure
$\theta$ with this $\varepsilon$ and $\sigma$ as above. Note that
$\theta$ is exact dimension with a.e. dimension at least one, hence
we can choose $r<r_{0}$ and $N_{0}$ large with the following property:
For any $N>N_{0}$ and any set $I\subseteq\{1,\ldots,N\}$ of size
$>\sigma N$, for $u_{n}=A^{n}B^{-[\alpha n]}u\in\mathbb{R}^{d}$,
the measure $\theta_{I}=\frac{1}{|I|}\sum_{n\in I}\delta_{u_{n}}$
satisfies.
\begin{equation}
\frac{1}{\log(1/r)}H(\theta_{I},r)>1-\varepsilon\label{eq:theta-entropy}
\end{equation}

\subsubsection*{Defining $x_{n},y_{n}$ and comparing $y_{n}-x_{n}$ to $A^{n}B^{-[\alpha n]}u$}

Consider $y=y(k)$ and $N=N(y(k))$ for a large $k$ which we suppress
in our notation, and define the points
\begin{align*}
x_{n} & =T_{B}^{N-[\alpha n]}T_{A}^{n}x\\
y_{n} & =T_{B}^{N-[\alpha n]}T_{A}^{n}y.
\end{align*}
These are well defined in $\mathbb{T}^{d}$ as long as $N-[\alpha n]>0$,
hence for all $1\leq n\leq[N/|\alpha|]$ (in the case $\alpha=0$
we will just take $1\leq n\leq N$).

Lifting $y-x$ to a vector of norm $\leq1/2$ in $\mathbb{R}^{d}$,
we can write
\begin{align*}
y_{n}-x_{n} & =B^{N-[\alpha n]}A^{n}(y-x)\bmod1\\
 & =B^{-[\alpha n]}A^{n}(B^{N}(y-x))\bmod1\\
 & =A^{n}B^{-[\alpha n]}u_{y}\bmod1.
\end{align*}
As $k\rightarrow\infty$, the vector $u_{y}=u_{y(k)}$ becomes arbitrarily
close to $u$, and $N=N(y(k))$ becomes arbitrarily large. Thus we
can ensure that the points $y_{n}-x_{n}$ and $A^{n}B^{-[\alpha n]}u$
are arbitrarily close for $n$ in arbitrarily long segments $[1,M]$.
More precisely, we can choose a sequence $\rho(k)\rightarrow0$, and
integers $M(k)\ll N(y(k))$ with $M(k)\rightarrow\infty$ (and in
particular $M(k)>N_{0}$ for large $k$), such that
\begin{equation}
\sup_{1\leq n\leq M(k)}d(y_{n}-x_{n},A^{n}B^{-[\alpha n]}u)<\rho(k).\label{eq:yn-xn-approximate-un}
\end{equation}

\subsubsection*{The entropy of $y_{n}-x_{n}$ at times that $T_{A}^{n}x\in D_{i}$ }

For each $1\leq i\leq L$ define 
\[
I_{i}=\{1\leq n\leq M(k)\mid T_{A}^{n}x\in D_{i}\}.
\]
By our choice of $E$, and since $x\in E$ we have 
\[
\frac{1}{M(k)}|I_{i}|\rightarrow\mu(D_{i})\qquad\text{as }k\rightarrow\infty.
\]
In particular when $N$ (equivalently $k$) is large enough, $|I_{i}|>\sigma M(k)$.

\subsubsection*{Estimating the entropy of $(y_{n})_{n\in I_{i}}$}

For each $1\leq i\leq L$ we have arranged things so that Corollary
\ref{cor:theta-have-substantial-entropy} applies for all large $k$.
So, writing $\nu{}_{i}$ for the distribution $\nu{}_{i}$ of $(y_{n}-x_{n})_{n\in I_{i}}$,
for all sufficiently large $k$ we have 
\begin{equation}
\frac{1}{\log(1/r)}H(\nu_{i},r)>1-\varepsilon.\label{eq:nu-i-entropy-bound}
\end{equation}
On the other hand, by definition $x_{n}\in Y_{i}$ for $n\in I_{i}$,
so the uniform distribution $\nu'_{i}$ on the sequence $(x_{n})_{n\in I_{i}}$
is supported on $Y_{i}$. But $H_{top}(Y_{i},K)<h$, so by (\ref{eq:small-Htop-K-implies-small-c-r})
we have $\dim(Y_{i},r)=O(h)$. By (\ref{eq:covering-bounds-entropy}),
assuming as we may that $r$ is small enough, this implies 
\begin{equation}
\frac{1}{\log(1/r)}H(\nu'_{i},r)<O(h).\label{eq:nu-i-prime-entropy-bound}
\end{equation}
It follows from Lemma \ref{lem:irrational-rotation-along-positive-density-set}
that the uniform distribution $\nu''_{i}$ on the sequence $(y_{n})_{n\in I_{i}}$
satisfies 
\begin{align*}
\frac{1}{\log(1/r)}H(\nu''_{i},r) & >1-\varepsilon-O(h)\\
 & >1-2\varepsilon
\end{align*}
assuming, as we may, that $h\ll\varepsilon$. 

Hence, by our choice of $r_{0}$ (from Lemma \ref{lem:measure-on-Omega-K-h-have-small-entropy},
applied to $K,h$ and $\eta=1-2\varepsilon$), the definition (\ref{eq:def-of-Omega-K-h})
of $\Omega_{K,h}$, and since $r<r_{0}$, 
\begin{align}
\frac{1}{|I_{i}|}\{n\in I_{i}\mid y_{n}\notin\Omega_{K,h}\}= & \frac{1}{|I_{i}|}\{i\in I_{i}\mid H_{top}(\overline{O_{B}(y_{n})},K)\geq h\}\nonumber \\
 & >\frac{1-2\varepsilon}{d}-O(h)\\
 & >\frac{1}{2d}.\label{eq:times-when-y-n-is-not-in-Omega-K-h}
\end{align}
In the last line we used $\varepsilon<1/4$ and assumed, as we may,
that $h\ll\varepsilon$.

\subsubsection*{Completing the proof}

Since $y_{n}\in O_{B}(T_{A}^{n}y)$ we have $O_{B}(y_{n})\subseteq O_{B}(T_{A}^{n}y)$,
hence
\[
H_{top}(\overline{O_{B}(T_{A}^{n}y)},K)\geq H_{top}(\overline{O_{B}(y_{n})},K).
\]
Thus, by (\ref{eq:times-when-y-n-is-not-in-Omega-K-h}),
\[
\frac{1}{|I_{i}|}\{i\in I_{i}\mid T_{A}^{n}y\notin\Omega_{K,h}\}>\frac{1}{2d}.
\]
For large $k$, the sequence $(T_{A}^{n}y))_{n=1}^{M(k)}$ visits
$D_{i}$ with frequency $\mu(D_{i})+o(1)$ (since by definition of
$E$, convergence is uniform on $E$ and $y\in E$). Since $\sum_{i=1}^{L}\mu(D_{i})>1-\varepsilon>1/2$
(see (\ref{eq:Ai-union-measure}) and using $\varepsilon<1/4$), we
conclude that 
\begin{align*}
\frac{1}{M(k)}\{1\leq n\leq M(k)\mid T_{A}^{n}y\notin\Omega_{K,h}\} & \geq(1-\varepsilon)\frac{1}{2d}\geq\frac{1}{4d}.
\end{align*}

Since convergence of the ergodic averages of $1_{\Omega_{K,h}}$ is
uniform on $E$ and that $y\in E$, we conclude that by taking $k\rightarrow\infty$
that
\[
\mu(\Omega_{K,h})<1-\frac{1}{4d}.
\]
This contradicts (\ref{eq:definition-of-K}).

\bibliographystyle{plain}
\bibliography{bib}

\bigskip
\bigskip
\footnotesize
\noindent{}\texttt{Department of Mathematics, The Hebrew University of Jerusalem, Jerusalem, Israel\\ email: michael.hochman@mail.huji.ac.il}
\end{document}